\def\Nodes[#1][#2]{
 \foreach \x in {1,...,#2} {
	 \filldraw [black] (\x, #1) circle (2pt);
	 }
	}
\def\ShiftX{\pgftransformxshift}
\def\XUNIT{1} 	\def\YUNIT{1}
\newcommand{\VCenter}[2]{
\begin{tikzpicture}[xscale=\XUNIT, yscale=\YUNIT]
		\draw[white] (0,0.5*#1)--(0,-0.5*#1);
		\node at (0,0) {#2};
\end{tikzpicture}
}
\newdimen\hoogte    \hoogte=12pt    
\newdimen\breedte   \breedte=14pt   
\newdimen\dikte  \dikte=0.5pt    
\def\beginYoung{
 \begingroup
 \def\vr{\vrule height0.8\hoogte width\dikte depth 0.2\hoogte}
 \def\fbox##1{\vbox{\offinterlineskip
    \hrule height\dikte
    \hbox to \breedte{\vr\hfill##1\hfill\vr}
    \hrule height\dikte}}
 \vbox\bgroup \offinterlineskip \tabskip=-\dikte \lineskip=-\dikte
  \halign\bgroup &\fbox{##\unskip}\unskip  \crcr }
\def\End@Young{\egroup\egroup\endgroup}
\newtheorem{theorem}{Theorem}[section]
\newtheorem{proposition}[theorem]{Proposition}
\newtheorem{lemma}[theorem]{Lemma}
\newtheorem{corollary}[theorem]{Corollary}
\newtheorem{example}[theorem]{Example}
\theoremstyle{definition}
\newtheorem*{defi}{Definition}
\newtheorem*{remark}{Remark}
\newcommand{\Z}{\mathbb{Z}}    
\newcommand{\C}{\mathbb{C}}    
\newcommand{\short}[1]{\hbox{$#1$}}
 \def\vv{\mathrm{v}} \def\ww{\mathrm{w}} \def\ii{\mathrm{i}}  \def\cB{\mathcal{B}}\def\cD{\mathcal{D}}\def\cG{\mathcal{G}}\def\cP{\mathcal{P}}
\def\End{\mathrm{End}} 
\def\id{\mathrm{id}}  
\title{The Quasi-Partition Algebra}
\date{\today}
\author{Zajj Daugherty}
 \thanks{Z.\ Daugherty is partially supported by
    NSF Grant DMS-1162010}
\author{Rosa Orellana}   
\address{Zajj Daugherty and Rosa Orellana, Dartmouth College, Mathematics Department, 6188 Kemeny Hall, Hanover, NH 03755, USA.}    
\email{Zajj.B.Daugherty@dartmouth.edu}
\email{Rosa.C.Orellana@dartmouth.edu}
\begin{document}
\maketitle
\begin{abstract}
We introduce the quasi-partition algebra $QP_k(n)$ as a centralizer algebra of the symmetric group.  This algebra is a subalgebra of the partition algebra and inherits many similar combinatorial properties. We construct a basis for $QP_k(n)$, give a formula for its dimension in terms of the Bell numbers, and describe a set of generators for $QP_k(n)$ as a complex algebra. In addition, we  give the dimensions and indexing set of its irreducible representations. We also provide the Bratteli diagram for the tower of quasi-partition algebras (constructed by letting $k$ range over the positive integers).
\end{abstract}

\section*{Introduction}

We introduce the centralizer algebra $QP_k(n)$, the {\em quasi-partition algebra}.  This algebra arises as a subalgebra of the partition algebra, $P_k(n)$, which was introduced independently by Jones \cite{Jones}  and Martin \cite{Martin} as a generalization of the Temperley-Lieb algebra and the Potts model in statistical mechanics.   Jones defined $P_k(n)$ as a centralizer algebra and explicitly described the Schur-Weyl duality between $P_k(n)$ and the symmetric group $S_n$.  Specifically, $P_k(n)$ generically centralizes the action of the symmetric group on the $k$-fold tensor product of the permutation representation $V$ of $S_n$, i.e.
$$
P_k(n) \cong \End_{S_n}(V^{\otimes k}) \qquad \text{ when } n\geq 2k.
$$
The partition algebra has a basis indexed by set partitions, these set partitions can be encoded into graphs that make the partition algebra into a diagram algebra with multiplication given by concatenation of diagrams.

The permutation representation $V$ decomposes into a direct sum of the trivial representation $S^{(n)}$ and the irreducible reflection representation $W = S^{(n-1,1)}$.
We define $QP_k(n)$ as the centralizer 
$$QP_k(n) = \End_{S_n} (W^{\otimes k}).$$
We describe a basis for $QP_k(n)$, which is indexed by set partitions of $2k$ elements without sets of size one. The dimension is therefore the number of such partitions, given by a formula in terms of the Bell numbers.  

From our construction $QP_k(n)$ is  a subalgebra of $P_k(n)$, in addition we show that $QP_k(n)$ is isomorphic to a subalgebra of $P_k(n-1)$, and exploit this relationship to provide a set of generators for $QP_k(n)$ and find relations satisfied by these generators.  We give a formula for the product in $QP_k(n)$ and show that  it is dominated by the relations in $P_k(n-1)$,

Using the rule for decomposing the tensor product, $W\otimes S^\lambda$ of $W$ with any other irreducible representation $S^\lambda$ of the symmetric group,  we show that for $k\geq 2$ the irreducible representations of $QP_k(n)$ are indexed by the set of partitions of $0,1, 2, \ldots, k$.    We have constructed the Bratteli diagram, which encodes inclusion and restriction rules between $QP_{k-1}(n) $ and  $QP_{k}(n)$.  We also give a formula for the dimensions of the irreducible representations for $QP_{k}(n)$. 

We expect some results in the representation theory of $QP_k(n)$ to correspond to results about the Kronecker product. One should be able, through  a further exploration of the structure of this new algebra, to obtain results about symmetric functions and the representation theory of the symmetric group.

\section{The Partition Algebra}
The structure of the quasi-partition algebra $QP_k(n)$ is understood through the structure of the partition algebra $P_k(n)$. A basis for each algebra is encoded both as set partitions and as diagrams, and actions of both algebras on tensor space are calculated from those diagrams. Combinatorial results about the irreducible representations of $QP_k(n)$ will resemble those for the partition algebra as well. In this section we set the stage by describing the general partition algebra $P_k(x)$ in terms of the partition diagrams and describing its action on tensor space.

\subsection{Set partitions and partition diagrams} \label{section:partalgdiag} 
A \emph{set partition} of a set $S$ is  a set of pairwise disjoint subsets of $S$, called \emph{blocks}, whose union is $S$. Fix $k \in \Z_{>0}$, and denote
$$[k] = \{1, \dots, k\} \qquad \text{ and } \qquad [k'] = \{1', \dots, k'\},$$
so that $[k] \cup [k'] = \{1, \dots, k, 1', \dots, k'\}$ is formally a set with $2k$ elements.

For each set partition of $[k] \cup [k'] $, we associate a {diagram} as follows. Consider the set of  simple graphs with $2k$ vertices labeled from $[k] \cup [k']$, and draw the graph so that the vertices appear in two rows, $1, \dots k$ on the top and $1', \dots, k'$ on the bottom. Any two vertices in the same block of the set partitions are connected by a path. In particular, the connected components of the graph correspond to the blocks in the set partition.  Define two graphs to be equivalent if their connected components partition the $2k$ (labeled) vertices in the same way. Then we define a \emph{$k$-partition diagram} or simply \emph{diagram} as the equivalence class of graphs corresponding to the same set partition of $[k]\cup[k']$.  For example,

\begin{center}
	{\def\XUNIT{.75} \def\YUNIT{.75}
	\begin{tikzpicture}[scale=\YUNIT]
		\foreach \x in {1,...,4} {
			\node[above] at (\x,1) {\tiny \x};
			\node[below] at (\x,0) {\tiny \x'};
			}
		\Nodes[0][4]	\Nodes[1][4]
		\draw (1,1)--(1,0)--(2,1) (2,0)--(3,0)--(4,0)--(4,1);
	\end{tikzpicture}
	\qquad \VCenter{2}{and} \qquad 
	\begin{tikzpicture}[scale=\YUNIT]
		\foreach \x in {1,...,4} {
			\node[above] at (\x,1) {\tiny \x};
			\node[below] at (\x,0) {\tiny \x'};
			}
		\Nodes[0][4]
		\Nodes[1][4]
		\draw (1,0)--(1,1)--(2,1) (4,1)--(3,0)--(4,0)--(4,1)--(2,0);
	\end{tikzpicture}
}\end{center}
are equivalent, and both represent diagrams for the set partition $\{ \{1,2,2'\}, \{3\}, \{2', 3', 4', 4\}\}$.

Let $\C(x)$ be the field of rational functions with complex coefficients in an indeterminate $x$.  We define the product $d_1 \cdot d_2$ of two diagrams $d_1$ and $d_2$ using the concatenation of $d_1$ above $d_2$, where we identify
the southern vertices of $d_1$ with the northern vertices of $d_2$.   
If there are $c$ connected components consisting only of  middle vertices, then the product is set equal to $x^c$ times the diagram  with the middle components removed. Extending this linearly defines a multiplication on $P_k(x)$.

\noindent For example,
\begin{center}
	{\def\UNIT{.75}
	\begin{tikzpicture}[scale=\UNIT]
		\Nodes[0][4]	\Nodes[1][4]
		\draw (1,1)--(2,1) (1,0)--(2,0) (3,0)--(4,0)--(4,1);
		\ShiftX{4cm};
		\node at (1,.5) {$*$};
		\ShiftX{1cm};
		\Nodes[0][4]
		\Nodes[1][4]
		\draw  (3,1)-- (1,0) (4,1)--(4,0)--(3,0)--(2,0);
		\ShiftX{4cm};
		\node[right] at (1,.5) {$= \quad x\,*$};
		\ShiftX{3cm};
		\Nodes[0][4]
		\Nodes[1][4]
		\draw (1,1)--(2,1) (4,1)--(4,0)--(3,0)--(2,0)--(1,0);
	\end{tikzpicture}
}\end{center}

\noindent This product is associative and independent of the graph chosen to represent the partition diagram.

The \emph{partition algebra} $P_k(x)$ is the $\mathbb{C}(x)$-span of the $k$-partition diagrams with this product (with $P_0(x) = \mathbb{C}(x)$).  Under this product, $P_k(x)$ is an associative algebra with identity given by the diagram corresponding to $\{\{1,1'\}, \dots, \{k,k'\}\}$.  The dimension of  $P_k(x)$ is the number of set partitions of $2k$ elements, i.e. the \emph{Bell number} $B(2k)$.

\subsection{Generators and Relations of $P_k(x)$}\label{sec:gensP} A presentation for $P_k(n)$ has been given in \cite{HalversonRam} and in \cite{East}.  
Let 
$$\begin{tikzpicture}[xscale=.4, yscale=.5]
	\node[left] at (-2,.5) {$b_i = $};
	 \foreach \x in {-2,0,1,...,3,5} {
	 \filldraw [black] (\x, 0) circle (2pt);
	 \filldraw [black] (\x, 1) circle (2pt);
	 }
	\draw (1,0)--(1,1)--(2,1)-- (2,0)--(1,0);
	\draw  (0,1)--(0,0) (-2,1)--(-2,0) (3,1)--(3,0) (5,1)--(5,0);
	\node at (-1,.5) {$\dots$};
	\node at (4,.4) {$\dots$};
	\node at (1,1.5) {$i$};
	\end{tikzpicture} 
\qquad	
\begin{tikzpicture}[xscale=.4, yscale=.5]
	\node[left] at (-2,.5) {$p_i = $};
	 \foreach \x in {-2,0,1,2,4} {
	 \filldraw [black] (\x, 0) circle (2pt);
	 \filldraw [black] (\x, 1) circle (2pt);
	 }
	\draw  (0,1)--(0,0) (-2,1)--(-2,0) (2,1)--(2,0) (4,1)--(4,0);
	\node at (-1,.5) {$\dots$};
	\node at (3,.3) {$\dots$};
	\node at (1,1.5) {$i$};
	\end{tikzpicture} 
\qquad \text{and}\qquad \begin{tikzpicture}[xscale=.4, yscale=.5]
	\node[left] at (-2,.5) {$s_i = $};
	 \foreach \x in {-2,0,1,...,3,5} {
	 \filldraw [black] (\x, 0) circle (2pt);
	 \filldraw [black] (\x, 1) circle (2pt);
	 }
	\draw (1,0)--(2,1)  (1,1)--(2,0);
	\draw  (0,1)--(0,0) (-2,1)--(-2,0) (3,1)--(3,0) (5,1)--(5,0);
	\node at (-1,.5) {$\dots$};
	\node at (4,.4) {$\dots$};
	\node at (1,1.5) {$i$};
	\node[right] at (5,.5){.};
	\end{tikzpicture} 
$$	

\begin{theorem}[Theorem 1.11 of \cite{HalversonRam}] Fix $k\in\Z_{>0}$. The partition algebra $P_k(x)$ is the unital associative  
$\mathbb{C}$-algebra presented by the generators
$b_i$, $s_i$, and $p_j$ for $1\leq i\leq k-1$ and $1\leq j\leq k$,
together with Coxeter, idempotent, commutation and contraction relations.
\end{theorem}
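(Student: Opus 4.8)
The plan is to deduce this presentation from the two standard ingredients for diagram algebras: that the displayed diagrams generate $P_k(x)$, and that the relations suffice to reduce any word in them to a normal form indexed by set partitions. Write $A_k$ for the unital associative $\C(x)$-algebra abstractly presented by symbols $b_i, s_i$ for $1 \le i \le k-1$ and $p_j$ for $1 \le j \le k$, subject to the Coxeter, idempotent, commutation and contraction relations. First I would verify that the concrete diagrams denoted $b_i$, $s_i$, $p_j$ satisfy all of these relations; each check is a short computation with partition diagrams --- for instance the diagrams $s_i$ generate a copy of the symmetric group $S_k$ inside $P_k(x)$ since concatenation of permutation diagrams mirrors composition in $S_k$, one has $b_i^2 = b_i$ and $p_j^2 = x\,p_j$ because concatenating $b_i$ (resp.\ $p_j$) with itself produces zero (resp.\ exactly one) free middle component, and the contraction relations such as $s_i b_i = b_i s_i = b_i$ are read directly off of concatenation. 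This produces a well-defined algebra homomorphism $\Phi\colon A_k \to P_k(x)$ carrying each abstract generator to the like-named diagram.

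Next I would show $\Phi$ is surjective, i.e.\ that $b_i, s_i, p_j$ generate $P_k(x)$. The mechanism is a normal-form factorization: given any set partition $\pi$ of $[k]\cup[k']$, write its diagram $d_\pi = \sigma \cdot e_\pi \cdot \tau$, where $\sigma$ and $\tau$ are permutation diagrams (words in the $s_i$, permuting the top and bottom vertices respectively) and $e_\pi$ is a ``planar standard'' diagram recording only the block sizes of $\pi$ together with which vertices propagate. Such a planar standard diagram is then assembled explicitly from the generators: consecutive top (resp.\ bottom) vertices lying in a common block are fused with the $b_i$, non-propagating vertices are cut away with the $p_j$, and a single $b_i$ combined with a $p_j$ realizes a block meeting both rows. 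Organizing this by induction on $k$ --- peeling off the $k$-th strand and invoking $P_{k-1}(x) \subseteq P_k(x)$ --- keeps the bookkeeping manageable and shows the generators suffice.

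The substantive step, and the one I expect to be the main obstacle, is the reverse dimension bound $\dim_{\C(x)} A_k \le B(2k)$, which together with surjectivity of $\Phi$ and the equality $\dim_{\C(x)} P_k(x) = B(2k)$ forces $\Phi$ to be an isomorphism. For this I would fix, for each set partition $\pi$, a definite word $w_\pi$ in the generators with $\Phi(w_\pi) = d_\pi$ (the factorization above), and then prove, using only the defining relations, that $\{\,w_\pi : \pi \text{ a set partition of } [k]\cup[k']\,\}$ spans $A_k$ and is closed up to powers of $x$ under right multiplication by each generator: $w_\pi\, g = x^{c}\, w_{\pi'}$, where $\pi'$ is the set partition and $c$ the number of free middle components obtained by concatenating $d_\pi$ with the diagram of $g$. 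Verifying this closure is the heart of the argument: one must use the Coxeter relations to slide the permutation factors $\sigma, \tau$ past the standard part $e_\pi$, the commutation relations to localize the effect of $g$ to the strands it touches, and the contraction relations to produce each fusion, each cut, and each compensating factor of $x$ exactly as the diagram calculus predicts. Once this is in place, $\{w_\pi\}$ is a spanning set of cardinality $B(2k)$, so $\dim A_k \le B(2k)$ and $\Phi$ is a bijection, completing the proof.
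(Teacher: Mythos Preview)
The paper does not give its own proof of this statement: it is quoted verbatim as Theorem~1.11 of Halverson--Ram and used as a black box, so there is nothing in the present paper to compare your proposal against.

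That said, your outline is the standard and correct strategy for presentation theorems of diagram algebras, and it is essentially the route taken in the cited reference. Defining the abstract algebra $A_k$, checking the relations on diagrams to get a surjection $\Phi\colon A_k\to P_k(x)$, and then bounding $\dim A_k$ by $B(2k)$ via a normal-form spanning set is exactly the right architecture. Two remarks on execution: first, Halverson--Ram organize the induction through the half-integer tower $P_{k-1}(x)\subset P_{k-\frac12}(x)\subset P_k(x)$ rather than peeling off a single strand, which streamlines the bookkeeping for the normal form; second, your assertion that $w_\pi\, g$ reduces to a single $x^c\, w_{\pi'}$ is true at the level of diagrams, but establishing this purely from the abstract relations is precisely where the work lies, and a careful case analysis (or an appeal to a cellular or Jones basic-construction structure) is needed to close that gap. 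Your proposal correctly identifies this as the main obstacle.
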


\medskip

It is often useful to additionally distinguish the element 
$$\begin{tikzpicture}[scale=.5]
	\node at (-5,.5) {$e_i =b_ip_ip_{i+1}b_i=\ \ $};
	 \foreach \x in {-2,0,1,...,3,5} {
	 \filldraw [black] (\x, 0) circle (2pt);
	 \filldraw [black] (\x, 1) circle (2pt);
	 }
	\draw (1,0) (1,1)--(2,1) (2,0)--(1,0);
	\draw  (0,1)--(0,0)  (-2,1)--(-2,0) (3,1)--(3,0) (5,1)--(5,0);
	\node at (-1,.5) {$\dots$};
	\node at (4,.4) {$\dots$};
	\node at (1,1.5) {$i$};
	\end{tikzpicture} 
$$
for $1\leq i \leq k-1$. Using subsets of $\{s_i, e_i~|~ 1 \leq i \leq k-1\}$, one can generate the Temperley-Lieb algebra $TL_k(x)$, the group algebra of the symmetric group $\C S_k$, and the Brauer algebras $B_k(x)$ all as subalgebras of the partition algebra $P_k(x)$.

\subsection{$P_k(n)$ as a centralizer algebra of $S_n$.}
\label{section:centralizer}
Let $V$ denote the $n$-dimensional permutation representation of the symmetric group $S_n$.  That is, $V=\C\mbox{-span}\{v_i\,|\, 1\leq i\leq n\}$, where 
\begin{equation}
\label{Vaction}
\sigma\cdot v_i = v_{\sigma(i)}\qquad \mbox{ for } \sigma \in S_n.
\end{equation}
Let $S_n$ act diagonally on the basis of simple tensors in $V^{\otimes k}$:
\[\sigma\cdot v_{i_1}\otimes v_{i_2} \otimes \cdots \otimes v_{i_k} = v_{\sigma(i_1)}\otimes v_{\sigma(i_2)} \otimes \cdots \otimes v_{\sigma(i_k)},\]
and extend this action linearly to $V^{\otimes k}$.  Hence $V^{\otimes k}$ is a module for $S_n$.

As above, number the vertices of a $k$-partition diagram $1,\ldots, k$ from left to right in the top row and $1', \ldots, k'$ from left to right on the bottom row.  For each $k$-partition diagram $d$ and each integer sequence $i_1\ldots, i_k, i_{1'}, \ldots, i_{k'}$ with $1\leq i_r\leq n$, define
\begin{equation}
\label{eq:diagrammatrix}
\delta(d)_{i_{1'}, \ldots, i_{k'}}^{i_1,\ldots, i_k} = \begin{cases} 1 & \mbox{ if $i_t= i_s$ whenever vertices $t$ and $s$ are connected in $d$,} \\ 0 & \mbox{ otherwise.}
\end{cases}
\end{equation}
Define an action of a partition diagram $d\in P_k(n)$ on $V^{\otimes k}$ by defining it on the standard basis by 
\[d\cdot (v_{i_{1}} \otimes v_{i_{2}} \otimes \cdots \otimes v_{i_{k}}) = \sum_{1 \leq i_{1'}, \ldots i_{k'}\leq n} \delta(d)_{i_{1'}, \ldots, i_{k'}}^{i_1,\ldots, i_k} v_{i_{1'}} \otimes v_{i_{2'}} \otimes \cdots \otimes v_{i_{k'}}.\]
Thus, the actions of the preferred generators of $P_2(n)$ on $V^{\otimes 2}$ are given by
\begin{equation}\label{actionsofP}
\begin{array}{r@{~}l}
\phantom{\Big|}	b\cdot (v_i\otimes v_j) = \delta_{ij} v_i\otimes v_i,  & \qquad e \cdot (v_i\otimes v_j) =  \sum\limits_{\ell=1}^n v_\ell \otimes v_\ell,\\
\phantom{\Big|} s \cdot (v_i\otimes v_j) = v_j \otimes v_i, \qquad &  \text{ and } \qquad  p\otimes \mathrm{id} \cdot (v_i\otimes v_j) = \left(\sum\limits_{\ell=1}^n v_\ell\right)\otimes v_j,
\end{array}\end{equation}
where 
\begin{equation}\label{generator-diagrams}
{\def\UNIT{.5}
\begin{tikzpicture}[scale=\UNIT]
	\node[left] at (1,.5) {$\phantom{bpes}$$b =$ ~};
	 \foreach \x in {1,2} {
		 \filldraw [black] (\x, 0) circle (2pt);
		 \filldraw [black] (\x, 1) circle (2pt);
	 }
	\draw (1,0)--(1,1)--(2,1)-- (2,0)--(1,0);
	\node[right] at (2,.5){~,$\phantom{bpes}$};
	\end{tikzpicture} 	
\begin{tikzpicture}[scale=\UNIT]
	\node[left] at (1,.5) {$\phantom{bpes}$$e =$ ~};
	 \foreach \x in {1,2} {
		 \filldraw [black] (\x, 0) circle (2pt);
		 \filldraw [black] (\x, 1) circle (2pt);
	 }
	\draw (1,1)--(2,1)  (2,0)--(1,0);
	\node[right] at (2,.5){~,$\phantom{bpes}$};
	\end{tikzpicture} 	
\begin{tikzpicture}[scale=\UNIT]
	\node[left] at (1,.5) {$\phantom{bpes}$$s =$ ~};
	 \foreach \x in {1,2} {
		 \filldraw [black] (\x, 0) circle (2pt);
		 \filldraw [black] (\x, 1) circle (2pt);
	 }
	\draw (1,0)--(2,1)  (1,1)--(2,0);
	\node[right] at (2,.5){~,$\phantom{bpes}$and};
	\end{tikzpicture} 	
\begin{tikzpicture}[scale=\UNIT]
	\node[left] at (1,.5) {$\phantom{bpes}$$p =$ ~};
	 \foreach \x in {1} {
		 \filldraw [black] (\x, 0) circle (2pt);
		 \filldraw [black] (\x, 1) circle (2pt);
	 }
	\node[right] at (1,.5){~.$\phantom{bpes}$};
	\end{tikzpicture}} 
\end{equation}
Then $s_i, e_i, b_i$, for $i=1, \ldots, k-1$ can be identified with the maps in $\End(V^{\otimes k})$ given by
$$\id^{\otimes i-1} \otimes d \otimes \id^{\otimes k-i+1}$$
where $d$ is one of $s, e$ or $b$; the elements $p_i$ for $i=1, \ldots, k$  can be identified with the maps in $\End(V^{\otimes k})$ given by
$$ \id^{\otimes i-1}\otimes p \otimes \id^{\otimes k-i}.$$

\begin{theorem}[\cite{Jones}]\label{thm:PkFullCentralizer} $S_n$ and $P_k(n)$ generate full centralizers of each other in $\mbox{End}(V^{\otimes k})$.  In particular, 
\begin{enumerate}[\quad(a)]
\item $P_k(n)$ generates $\mbox{End}_{S_n}(V^{\otimes k})$, and when $n\geq 2k$,  $P_k(n) \cong \mbox{End}_{S_n}(V^{\otimes k})$;
\item $S_n$ generates $\mbox{End}_{P_k(n)}(V^{\otimes k})$.
\end{enumerate}

\end{theorem}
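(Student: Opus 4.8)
The plan is to establish Theorem~\ref{thm:PkFullCentralizer} via classical Schur--Weyl duality machinery, exploiting the fact that $V^{\otimes k}$ is a semisimple $S_n$-module and that $\delta$ is an algebra homomorphism. First I would verify that the assignment $d \mapsto \delta(d)$, extended linearly, really is an algebra homomorphism $P_k(n) \to \End(V^{\otimes k})$: this amounts to checking that concatenation of diagrams corresponds to matrix multiplication and that a closed middle loop contributes a factor of $n$ (which is why we specialize $x = n$). This is the routine bookkeeping step --- one writes $(\delta(d_1)\delta(d_2))^{i_1,\dots,i_k}_{i_{1'},\dots,i_{k'}} = \sum_{j_1,\dots,j_k} \delta(d_1)^{i_1,\dots,i_k}_{j_1,\dots,j_k}\delta(d_2)^{j_1,\dots,j_k}_{i_{1'},\dots,i_{k'}}$, observes the sum forces equality of indices along connected components of the concatenated graph, and counts free middle components as powers of $n$. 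I would also check directly that the action is $S_n$-equivariant: since $\delta(d)$ only depends on equalities among indices, conjugating by the diagonal $S_n$-action permutes indices uniformly and leaves $\delta(d)$ fixed, so $\delta(d) \in \End_{S_n}(V^{\otimes k})$.

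Next, for part (b) and the ``full centralizer'' claim in general, I would appeal to the double centralizer theorem. Since $\C S_n$ is semisimple and $V^{\otimes k}$ is a finite-dimensional $\C S_n$-module, the image $A := \C S_n \to \End(V^{\otimes k})$ is a semisimple subalgebra, and the double centralizer theorem gives $\End_{\End_A(V^{\otimes k})}(V^{\otimes k}) = A$, which is exactly statement (b) once we know $\End_A(V^{\otimes k})$ is what $P_k(n)$ maps onto. So the crux is part (a): showing $\delta\big(P_k(n)\big) = \End_{S_n}(V^{\otimes k})$, i.e. surjectivity onto the centralizer. The standard argument is to decompose $\End_{S_n}(V^{\otimes k}) = \big(V^{\otimes k} \otimes (V^{\otimes k})^*\big)^{S_n} \cong (V^{\otimes 2k})^{S_n}$ (using that $V$ is self-dual as an $S_n$-module, being a permutation module with a real orthonormal basis), and then to produce an explicit spanning set of the invariant space indexed by set partitions of $[k]\cup[k']$: the invariant tensor attached to a set partition $\pi$ is $\sum v_{i_1}\otimes\cdots\otimes v_{i_{k'}}$ summed over all index assignments constant on blocks of $\pi$, and one checks these are exactly the images of the diagram basis under $\delta$ and that they span $(V^{\otimes 2k})^{S_n}$ --- the latter because an $S_n$-invariant function on $[n]^{2k}$ depends only on the partition of coordinate positions induced by coincidences, and when $n \geq 2k$ all such partitions are realizable, giving linear independence as well.

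The main obstacle I anticipate is the surjectivity/spanning claim in part (a) together with the injectivity threshold $n \geq 2k$: one must argue carefully that the $\delta(d)$ span the full centralizer (not just that they land in it), which requires knowing that every $S_n$-invariant vector in $V^{\otimes 2k}$ is a $\C$-linear combination of the ``orbit sums'' over partition-constant index tuples --- a statement about invariant theory of the symmetric group acting on $[n]^{2k}$ --- and separately that these orbit sums are linearly independent precisely when $n$ is large enough to realize all $B(2k)$ partitions (i.e. $n \geq 2k$), which is where the isomorphism (as opposed to mere surjection) comes from. I would handle the spanning by the orbit-counting argument just described and the independence by evaluating on index tuples that realize a given partition with the maximal number of distinct values. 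Everything else --- the homomorphism property, the $S_n$-equivariance, and the passage from (a) to (b) via the double centralizer theorem --- is then formal.
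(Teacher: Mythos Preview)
The paper does not actually give a proof of this theorem; it is stated with attribution to Jones \cite{Jones} and used as a black box, so there is no ``paper's own proof'' to compare against. Your proposal is the standard argument for this classical result and is correct in outline: the homomorphism and equivariance checks are routine, the identification $\End_{S_n}(V^{\otimes k}) \cong (V^{\otimes 2k})^{S_n}$ via self-duality of $V$ is valid, the spanning of invariants by partition-indexed orbit sums follows from the fact that $S_n$-orbits on $[n]^{2k}$ are determined by coincidence patterns, and the independence threshold $n \geq 2k$ is exactly what is needed to realize every set partition of $2k$ positions with distinct values on distinct blocks. The appeal to the double centralizer theorem then gives (b) from (a) as you describe.
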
 
For the remainder of the paper, we use Theorem \ref{thm:PkFullCentralizer} to identify the elements of $P_k(n)$  (with $n \geq 2k$ integers) with endomorphisms of $V^{\otimes k}$.


\section{The Quasi-Partition Algebra}
In this section we are interested in studying the centralizer algebra 
\[QP_k(n)=\mbox{End}_{S_n}(W^{\otimes k}), \qquad \text{ where } \quad W = S^{(n-1, 1)}\]
is the irreducible representation of $S_n$ indexed by the partition $(n-1,1)$.  With $V$ as in \eqref{Vaction}, it is known that $V$ decomposes as  $V= T\oplus W$, where $T$ is the trivial representation (indexed by the partition $(n)$).
\subsection{Action of $S_n$ on $W=S^{(n-1,1)}$}
Using the same basis $\{v_1, \ldots, v_n\}$ for $V$ as above, we fix a basis $\{w_2, \dots, w_{n-1}\}$ for $W$, where  $w_i = v_i-v_1$. The permutation action of $S_n$ on $V$ in  \eqref{Vaction} induces an action of $S_n$ on $W$ given by 
\[ \sigma \cdot w_i = w_{\sigma(i)}, \qquad \mbox{for $\sigma\in S_{\{2, \ldots, n\}}$} \qquad \mbox{ and } \qquad s_1\cdot w_i = \begin{cases}  w_i-w_2 & \mbox{for $i\neq 2$}\\ -w_2 & \mbox{for $i=2$}.\end{cases}\]
With $S_n$ acting diagonally on $W^{\otimes k}$, we define the \emph{Quasi-partition algebra} as the centralizer algebra 
\[ QP_k(n) =\End_{S_n} (W^{\otimes k}) = \{ g: W^{\otimes k} \rightarrow W^{\otimes k} ~|~ g\sigma=\sigma g \quad  \forall \sigma\in S_n\}.\]

The partition algebra $P_k(n-1)$ can be recognized as a subalgebra of $\End(W^{\otimes k})$ via the following change of basis. Define 
\[
f: \{v_1, \ldots, v_{n-1} \} \rightarrow \{w_2, \ldots, w_n\}  \qquad \mbox{ by  } \quad f: v_i \mapsto w_{i+1},
\]
and extend linearly. Then if $d$ is a diagram in $P_k(n-1)$, set 
\begin{equation}\label{eq:BracketD}
	[d] = f\circ d \circ f^{-1}.
\end{equation}  Hence, the action of $[d]$ on $W^{\otimes k}$ is the same as the action of $d$ on $V^{\otimes k}$ in one fewer dimension, i.e. $n$ has decreased by 1.   
Specifically, since the map $d \mapsto [d]$ is a homomorphism, we have the property that if $d_1, d_2\in P_k(n-1)$, then $[d_1] [d_2] = [d_1d_2]$. We remark that the maps $[d]  : W^{\otimes k}\rightarrow W^{\otimes k}$ are not necessarily elements in the centralizer algebra $QP_k(n)$. We will, however, use them to construct the elements of $QP_k(n)$.

\subsection{Projections}

Since $V=W\oplus T$, we have \\
\centerline{$\displaystyle 
V^{\otimes k} \cong W^{\otimes k} \oplus \left(\bigoplus_{i=0}^{k-1} (W^{\otimes i} \otimes T^{k-i})\right).
$}
In this section, we construct the maps in $QP_k(n)$ by applying maps in $P_k(n)$ on $W^{\otimes k}$ and then projecting the result back onto $W^{\otimes k}$.

To this end, first define the projection 
$\varpi: V\to T $
to be a projection of $V$ onto $T$.  Hence, 
\[\varpi ( v_i) = \short{\frac{1}{n}}( v_1 + \cdots  + v_n)\qquad \mbox{ for all $i=1, \ldots, n$}\]
The matrix representation of  $n\varpi$ is the matrix of all $1$'s, i.e.\  $\sum_{1\leq i, j, \leq n} E_{ij},$ 
were $E_{ij}$ are the matrix units. 
Now define 
$\varpi_\ell = \mathbf{1}^{\otimes \ell-1} \otimes \varpi \otimes \mathbf{1}^{\otimes k-\ell}.$
The endomorphism ring $\mbox{End}(V^{\otimes k})$ has basis $E_{j_1, \ldots, j_k}^{i_1, \ldots, i_k}$, and as a matrix $\varpi_\ell$ is given by
$$n\varpi_\ell 
	= \hspace{-.2cm}
	\sum_{{1\leq i,j \leq n} \atop {1 \leq a_m \leq n \text{ for } m \neq \ell}} 
	\hspace{-.2cm} E^{a_1, \dots, a_{\ell - 1}, i, a_{\ell+1}, \dots, a_{k}}_{a_1, \dots, a_{\ell - 1}, j, a_{\ell+1}, \dots, a_{k}}.$$
So as an operator on $V^{\otimes k}$, $n\varpi_\ell = p_\ell$, the element of $P_k(n)$ with isolated vertices at $\ell$ and $\ell'$, and all other blocks of the form $\{i, i'\}$ for $i\neq \ell$ (see diagram at the beginning of Section \ref{sec:gensP}).  We will show that a basis of  $QP_k(n)$ is given by diagrams that do not have isolated vertices.  As we will see in Lemma \ref{elements}, this is a result of the fact that that isolated vertices occur when $d=p_\ell d'$ or $d=d'p_\ell$ for some diagram $d'$ and some $1\leq \ell \leq k$. 

Now we can define the projection $\pi : V\rightarrow W$  by 
\[\pi = \mbox{id} - \varpi, \qquad \text{ so that } \qquad \pi^{\otimes k} = \pi\otimes \pi \otimes \cdots \otimes \pi  \]
projects  $V^{\otimes k}$ onto $W^{\otimes k}$. 
To simplify computations, we transform bases of $V$ from $\{v_1, \ldots, v_n\}$  to  $\{v, w_2, \ldots, w_n\}$, where 
\[v=\sum_{i=1}^n v_i \qquad \mbox{ and} \qquad w_j=v_j-v_1.\]
So 
$$\pi(v) = 0, \quad \pi(w_i) = w_i, \quad \text{and} \quad \pi(v_i) = w_i -\short{\frac{1}{n}}w, \qquad \text{where} \quad w_1 = 0 \quad \text{and} \quad  w = \sum_{i=2}^nw_i.$$

\begin{lemma}\label{elements}
For all diagrams $d\in P_k(n)$, the projection $\pi^{\otimes k}\circ d$ is an element of  $QP_k(n)$.   Furthermore,  if $d$ is a diagram with one or more isolated vertices, then $\pi^{\otimes k} \circ d = 0$. 
\end{lemma}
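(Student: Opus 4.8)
The plan is to verify the two assertions in turn; both are straightforward.

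For the first assertion, observe that because $V=W\oplus T$ is a direct sum of $S_n$-submodules, the projection $\pi\colon V\to W$ with kernel $T$ is $S_n$-equivariant; hence so are $\pi^{\otimes k}\colon V^{\otimes k}\to W^{\otimes k}$ and the inclusion $\iota\colon W^{\otimes k}\hookrightarrow V^{\otimes k}$. By Theorem~\ref{thm:PkFullCentralizer}, $d\in P_k(n)$ acts on $V^{\otimes k}$ as an $S_n$-module homomorphism. Consequently the composite $\pi^{\otimes k}\circ d\circ\iota\colon W^{\otimes k}\to W^{\otimes k}$ --- which is the precise meaning of ``$\pi^{\otimes k}\circ d$'' in the paragraph before the lemma --- is $S_n$-equivariant, so it lies in $\End_{S_n}(W^{\otimes k})=QP_k(n)$.

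For the second assertion, suppose $d$ has an isolated vertex, and split into two cases. If the isolated vertex is a bottom vertex $\ell'$, then in the expression for $d\cdot(v_{i_1}\otimes\cdots\otimes v_{i_k})$ coming from \eqref{eq:diagrammatrix} the coefficient $\delta(d)^{i_1,\dots,i_k}_{i_{1'},\dots,i_{k'}}$ is independent of $i_{\ell'}$, since $\ell'$ is connected to nothing. Performing the now unconstrained sum over $i_{\ell'}$ therefore puts the vector $v=\sum_{m=1}^n v_m$ into the $\ell$-th tensor slot of every resulting term, and since $\pi(v)=0$ the operator $\pi^{\otimes k}$ annihilates $d\cdot(v_{i_1}\otimes\cdots\otimes v_{i_k})$; thus $\pi^{\otimes k}\circ d=0$, in fact already on all of $V^{\otimes k}$. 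If instead the isolated vertex is a top vertex $\ell$, then by the same observation $d\cdot(v_{i_1}\otimes\cdots\otimes v_{i_k})$ does not depend on $i_\ell$, and extending multilinearly in the $\ell$-th slot shows that $d$ kills every tensor whose $\ell$-th factor is $w_j=v_j-v_1$; hence $d$ vanishes on $V^{\otimes(\ell-1)}\otimes W\otimes V^{\otimes(k-\ell)}$, which contains $W^{\otimes k}$, so $\pi^{\otimes k}\circ d$ vanishes on $W^{\otimes k}$. (Equivalently, as hinted before the lemma, a short concatenation calculation gives $d=\tfrac1n\,d\,p_\ell$ in the first case and $d=\tfrac1n\,p_\ell\,d$ in the second, and one finishes using $p_\ell=n\varpi_\ell$ together with $\varpi\pi=\pi\varpi=0$.)

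None of this is technically hard; the only point that needs attention is to keep in mind that ``$\pi^{\otimes k}\circ d$'' has domain $W^{\otimes k}$, since in the top-vertex case the vanishing is forced by the restricted domain rather than by the projection applied to the output.
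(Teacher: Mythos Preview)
Your proof is correct and follows essentially the same approach as the paper. For the first assertion you use $S_n$-equivariance of $\pi$ (the paper phrases this as $\pi=\id-\tfrac1n p$), and for the second you separate into top and bottom isolated vertices and argue via the $\delta$-formula; your parenthetical remark that $d=\tfrac1n\,d\,p_\ell$ or $d=\tfrac1n\,p_\ell\,d$ and that one then uses $\pi\varpi=\varpi\pi=0$ is exactly the paper's argument.
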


\begin{proof}  Since $\pi = \id - \frac{1}{n}p$ is an operator on $V$  (with $p$ as in \eqref{generator-diagrams}), $\pi$ commutes with the action of $S_n$. So $\pi^{\otimes k}\circ d \in \End_{S_n}(V^{\otimes k}) = P_k(n)$. 
Now considering $W^{\otimes k} \subset V^{\otimes k}$, we have
$$\pi^{\otimes k}\circ d : W^{\otimes k} \xrightarrow{d} V^{\otimes k} \xrightarrow{\pi^{\otimes k}} W^{\otimes k}.$$ 
So $\pi^{\otimes k}\circ d$ is also an element of $\End_{S_n}(W^{\otimes k})=QP_k(n)$. 

Now suppose that $d$ is a diagram with an isolated vertex. If the isolated vertex occurs in the bottom row of the diagram on the $i$-th vertex, then $d= d' p_i$ for some diagram $d'$. But 
$p = n\varpi$ as operators on $V$, so $p$ acts as 0 on $W$.   So $\pi^{\otimes k} \circ d \cdot w_{i_1} \otimes \cdots \otimes w_{i_k} =0$. 

If instead, the isolated vertex occurs in the top of the diagram on the $i$-th vertex, then $d = p_i d'$. So again since  $p=n\varpi$ as operators on $V$, we have 
$$\pi^{\otimes k} (p_i d') = n\left(\pi^{\otimes i-1} \otimes (\pi \circ \varpi) \otimes \pi^{\otimes (k-i)}\right) d' = 0, \qquad \text{ since } \pi \circ \varpi =0.$$
\end{proof}

\subsection{Basis for the Quasi-partition algebra} In Lemma \ref{elements} we found a spanning set for $QP_k(n)$,  we now show that  the set of projections of the diagrams without singleton vertices will form a basis.  That is, 
$$QP_k(n) =  \C\text{-span}\{ \bar{d} \, |\,  d \in \cD \}, \qquad \text{ where } \quad  \cD = \{ \text{ diagrams $d$ without isolated vertices}\},$$
and $\bar d = \pi^{\otimes k} \circ d$.
In order to prove this, we will show that if $d$ is a diagram without isolated vertices, then one can write $\pi^{\otimes k} \circ d$ as a linear combination of the operator $[d]$ (as defined in \eqref{eq:BracketD}) and operators $[d']$ with isolated vertices.

Recall from \eqref{eq:BracketD} that for any diagram $d$, the operator $[d] = f \circ d \circ f^{-1} \in \End(W^{\otimes k})$ acts on $W^{\otimes k}$ the same way that 
the diagram $d$ acts on $(\C^{n-1})^{\otimes k}$.  Notice that $[d]$ is an element of $\End_{S_{\{2, \dots, k-1\}}}(W^{\otimes k})$.  However, $[d]$ is in general not an element of $\End_{S_n}(W^{\otimes k})$.

Considering $d$ as a set partition, let $B_i$ denote the blocks of the diagram $d$ and $|d|$ be the number of blocks in the diagram, so that  $d = \{ B_1, \dots, B_{|d|}\}.$
Let $B$ be a block in $d$, and define 
\[ B^t := B \cap [k] \quad \text{ and } \quad B^b := B \cap [k'],\]
so that $B^t$ (resp.\ $B^b$) is the set of vertices in $B$ which are on the top (resp.\ bottom) of the diagram. 
Then 
\begin{equation} \label{eq:diagramActionCondidion}
d \cdot (v_{i_{1'}} \otimes \cdots \otimes v_{i_{k'}}) = \begin{cases} v_{i_1} \otimes \cdots \otimes v_{i_k} & \mbox{if  for each $B\in d$,}\\
&\mbox{  $i_\ell = i_m$  for all  $\ell, m \in B \subseteq [k]\cup[k']$}, \\
0 & \mbox{otherwise}.
\end{cases}
\end{equation}

Now, let 
$$d^t = \{ B \in d ~|~ B^b = \emptyset \} \qquad \text{ and } \qquad d^b = \{ B \in d ~|~ B^t = \emptyset \}$$
be the sets of blocks containing vertices only on the top or bottom of the diagram, respectively. 
If $X$ is a subset of $[k]\cup[k']$, define the \emph{isolation of $d$ (at $X$)} as 
\begin{equation*}
d_X , \text{ the diagram constructed from $d$ by isolating all vertices in $X$.}
\end{equation*}
For example, if $k=4$, $X=\{ 1', 4'\}$ and $d =\{ \{1,1',2'\}, \{2,3,4\}, \{3',4'\}\}$, then \\
$d_X=\{ \{ 1,2'\}, \{1'\}, \{2,3,4\}, \{3'\}, \{4'\}\}$.  In pictures, 

$$\begin{tikzpicture}[scale=.75]
	\Nodes[1][4];
	\Nodes[0][4];
	\node at (0,.5) {$d =$};
	\draw (1,0)--(1,1)--(2,0)--(1,0) ;
	\draw (2,1)--(3,1)--(4,1) (4,0)--(3,0) ;
	\foreach \x in {1, ..., 4}{
	\node[above] at (\x, 1) {\tiny $\x$};
	\node[below] at (\x, 0) {\tiny $\x'$};
	}
	\end{tikzpicture}
\qquad 
\begin{tikzpicture}[scale=.75]
	\Nodes[1][4];
	\Nodes[0][4];
	\node at (0,.5) {$d_X =$};
	\draw (1,1)--(2,0)  (1,0);
	\draw (2,1)--(3,1)--(4,1); 
	\foreach \x in {1, ..., 4}{
	\node[above] at (\x, 1) {\tiny $\x$};
	\node[below] at (\x, 0) {\tiny $\x'$};
	}
	\end{tikzpicture}	
	$$
Notice that two different sets $X_1$ and $X_2$ can lead to the same isolation of $d$. In the above example, the set $X_2=\{1', 3', 4'\}$ and $X_1=\{1', 4'\}$ give $d_{X_2} = d_{X_1}$.

\begin{lemma}\label{lem:DominantTerms1}
The action of $\bar d:=\pi^{\otimes k} \circ d$ on $W^{\otimes k}$ is equal to the action of a linear combination of $[d]$ and diagrams $[d']$, where $d'$ is an isolation of $d$. That is, 
\begin{equation}\label{eq:DominantTerms1}
\bar d = [d] + \sum_{U } c_U [d_U].\end{equation}
where the sum is over non-empty subsets $U$ of vertices satisfying 
$$\text{ if \quad $U \cap B^b \neq \emptyset$ \quad for any block $B \in d$, \quad then $B \subseteq U$.}$$

\end{lemma}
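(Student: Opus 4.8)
The plan is to compute the action of $\bar d = \pi^{\otimes k}\circ d$ directly on a simple tensor $w_{i_1}\otimes\cdots\otimes w_{i_k}$ in $W^{\otimes k}$ and track which partition diagrams appear. First I would recall from Lemma~\ref{elements} that $\pi = \id - \frac{1}{n}p$ as an operator on $V$, so $\pi^{\otimes k} = \bigl(\id - \frac1n p\bigr)^{\otimes k} = \sum_{U\subseteq[k]} \bigl(-\frac1n\bigr)^{|U|} p_U$, where $p_U = \prod_{\ell\in U} p_\ell$ is the operator isolating the top vertices indexed by $U$ (more precisely, $p_U$ acts as $p$ in each tensor slot $\ell\in U$ and as $\id$ elsewhere). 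Since $d$ carries $W^{\otimes k}$ into $V^{\otimes k}$, and in the $w$-basis the diagram $d$ acts by the same combinatorial rule \eqref{eq:diagramActionCondidion} interpreted on the subspace where $w_1 = 0$, the composite $p_\ell\circ d$ is, up to the scalar, the operator of the diagram $d$ with the $\ell$-th top vertex isolated. Expanding, $\bar d = \sum_{U\subseteq[k]}\bigl(-\frac1n\bigr)^{|U|} n^{|U|}\,[\,d \text{ with top vertices } U \text{ isolated}\,] = \sum_{U\subseteq[k]} (-1)^{|U|}[d^{(U)}]$, where $d^{(U)}$ denotes $d$ with the indicated top vertices isolated (the factor $n^{|U|}$ arising because $p=n\varpi$ and $\varpi$ contributes the normalizing $\frac1n$, which precisely cancels).

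Next I would simplify this sum by observing that most terms are either zero or redundant on $W^{\otimes k}$. The key combinatorial fact is: isolating a top vertex $\ell$ of $d$ that currently sits in a block $B$ with $B^b\neq\emptyset$ produces a diagram that, on $W^{\otimes k}$, agrees with isolating the \emph{entire} set of bottom vertices $B^b$ as well — because once $\ell$ is disconnected from its companions, the constraint linking the surviving top part of $B$ to $B^b$ is broken, and one can repackage the resulting diagram as an isolation $d_U$ at a subset $U$ of vertices. Conversely, if $\ell$ lies in a block $B$ with $B^t = B$, i.e. $\ell$ is in a purely-top block, then the behavior is governed by $\pi\circ\varpi = 0$ (as in the proof of Lemma~\ref{elements}): terms isolating such vertices collapse. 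The surviving index sets $U$ are exactly those satisfying the stated saturation condition — whenever $U$ meets a bottom block $B^b$, it must contain all of $B$ — and the $U=\emptyset$ term is $[d]$ itself. Collecting the coefficients of each distinct diagram $[d_U]$ that appears gives \eqref{eq:DominantTerms1} with explicit (though for the statement unimportant) constants $c_U$.

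The main obstacle I anticipate is the bookkeeping that turns ``isolate top vertex $\ell$ of block $B$'' into ``isolate the subset $B^b\cup\{\ell\}$'' as honest isolations $d_U$, and verifying that two different $U$'s yielding the same isolation get their coefficients added correctly so that the final constants $c_U$ are well-defined and the leading term $[d]$ has coefficient exactly $1$. Concretely, I would argue: if $U\subseteq[k]$ does not contain, for every block $B$ it meets on the top, the full bottom part $B^b$, then $[d^{(U)}]$ equals $[d_{U'}]$ for $U' = U \cup \bigcup_{B\cap U\neq\emptyset} B^b$, but this identification must be done carefully because isolating $B^b$ inside $d^{(U)}$ may further merge or split blocks. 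I would handle the purely-top blocks separately using $\pi\circ\varpi=0$ to kill any $U$ meeting a block of $d^t$, so that only isolations at bottom-saturated sets survive. Once the combinatorial dictionary between subsets $U\subseteq[k]$ and admissible isolations is set up, extracting \eqref{eq:DominantTerms1} and checking that the $[d]$-coefficient is $1$ (it comes solely from $U=\emptyset$, since every nonempty admissible $U$ produces a strictly more-isolated diagram) is routine.
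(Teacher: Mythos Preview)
Your proposal has a genuine gap at the first step. The identity you assert,
\[
(p_U\circ d)\big|_{W^{\otimes k}} \;=\; n^{|U|}\,[d^{(U)}],
\]
is false, and with it the claimed cancellation that produces $\sum_{U\subseteq[k]}(-1)^{|U|}[d^{(U)}]$. The bracket operator $[d']$ is defined as $f\circ d'\circ f^{-1}$ where $d'$ acts on $(\C^{n-1})^{\otimes k}$ via the combinatorial rule \eqref{eq:diagramActionCondidion} in the $v$-basis; this is \emph{not} the same as restricting the $P_k(n)$-action of $d'$ on $V^{\otimes k}$ to $W^{\otimes k}$. In particular, $d\in P_k(n)$ does \emph{not} act on the $w_i$ by rule \eqref{eq:diagramActionCondidion}: you must first expand $w_i=v_i-v_1$. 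A small check shows the failure: for $k=2$ and $d=b_1$ the paper's computation gives
\[
\bar b_1=[b_1]-\tfrac1n[(b_1)_{\{1\}}]-\tfrac1n[(b_1)_{\{2\}}]+\tfrac1{n^2}[(b_1)_{\{1,2\}}]+\tfrac1{n^2}[(b_1)_{\{1,2,1',2'\}}],
\]
whereas your formula yields $[b_1]-[(b_1)_{\{1\}}]-[(b_1)_{\{2\}}]+[(b_1)_{\{1,2\}}]$, with wrong coefficients and a missing full-isolation term.

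The missing idea is that your expansion of $\pi^{\otimes k}$ handles only the \emph{output} (top) side. The isolations of bottom vertices---and hence the very condition ``$U\cap B^b\neq\emptyset\Rightarrow B\subseteq U$'' in the statement---arise from expanding the \emph{input} $w_{i_{j'}}=v_{i_{j'}}-v_1$ on the bottom side, which you never do. The paper carries out both expansions: it writes $\ww$ in the $v$-basis (producing a sum over subsets $S\subseteq d\setminus d^t$ of blocks whose bottom factors are all set to $-v_1$), applies $d$, and then writes $\pi(v_i)=w_i-\tfrac1n w$ on the output (producing a sum over $Y\subseteq[k]$). The admissible $U$'s are exactly $U=X\cup Y$ with $X=\bigcup_{B\in S}B$, which is where the saturation condition on $B^b$ comes from. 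Your attempt to manufacture bottom isolations after the fact (``isolating top vertex $\ell$ agrees with isolating all of $B^b$ as well'') cannot work, and your claim that $\pi\circ\varpi=0$ kills isolations inside purely-top blocks is also backwards: such isolations \emph{do} survive (they are the $Y$-part of the paper's double sum) and in fact contribute an extra factor of $n$ when an entire block of $d^t$ is isolated.
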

\begin{proof}
The first step is to understand how $d$ acts on an arbitrary element $\ww\in W^{\otimes k}$, 
$$\ww= w_{i_{1'}} \otimes \cdots \otimes w_{i_{k'}} = (v_{i_{1'}} - v_1) \otimes \cdots \otimes (v_{i_{k'}} - v_1).$$
Note that $\ww$ is the sum of terms $\vv$ with factors $v_{i_\ell}$s or $-v_1$s.  The collection of blocks $B$ in $d\setminus d^t$ (blocks containing bottom vertices) checks  for equality of factors in $\vv$ corresponding to vertices in those blocks.  Since $d\cdot \vv$ is zero if factors corresponding to vertices in the same block are not equal.  Since $i_\ell \neq 1$, there are only two types of terms when $d$ acts by non-zero: (1) on terms $\vv$ where  where for each block $B$, for all $\ell \in B^b$, the $i_\ell$'s take on the same value and these factors are all equal $v_{i_\ell}$ and (2) on terms where all factors corresponding to vertices in $B^b$ are all $-v_1$.  

We now carry the computation $(\pi^{\otimes k} \circ d )\cdot \ww$ .  Let  $S \subseteq d \setminus d^t$, and for 
 $\ii = (i_1, \dots, i_k, i_{1'}, \dots, i_{k'})$ let $\delta_{S, \ii}$ be the characteristic function
$$\delta_{ S, \ii} = \begin{cases}
1 & \text{ if for each $B \notin S$,  $i_\ell = i_m$ for all $\ell, m \in B$,  and
}\\
& \text{ for each $B \in S$,  $i_\ell = 1$ for all $\ell \in B^t$,}\\
0& \text{ otherwise.}
\end{cases}$$

Setting $w_1 = 0$, 
\begin{align}\label{eq:bigSsum}
(\pi^{\otimes k}\circ d) \cdot \ww
 &= \pi^{\otimes k}\cdot \sum_{S \subseteq d \setminus d^t} (-1)^{\sum_{B \in S} |B^b|} \sum_{i_1, \dots, i_k \in [n]} \delta_{S, \ii} (v_{i_1} \otimes \cdots \otimes v_{i_k})\\
 &= \sum_{S \subseteq d \setminus d^t} (-1)^{\sum_{B \in S} |B^b|} \sum_{i_1, \dots, i_k \in [n]} \delta_{S, \ii} ((w_{i_1} - \short{\frac1n}w) \otimes \cdots \otimes (w_{i_k} - \short{\frac1n}w)).\nonumber\end{align}
If $d^t = \emptyset$, then this implies that as operators on $\ww$
 \begin{align*} 
 \pi^{\otimes k}\circ d &= \sum_{X = \bigcup_{B \in S} B \atop S \subseteq d \setminus d^t} (-1)^{|X \cap [k']|}
	 \sum_{Y\subseteq [k] \setminus X}\left(\short{-\frac{1}{n}}\right)^{|Y\cup (X\cap[k])|}[d_{X \cup Y} ]\\
	&=\sum_{X, Y \atop U = X \cup Y} (-1)^{|U|}\frac1{n^{|U \cap [k]|}}[d_{U}],
\end{align*}
where the last sum is a double sum over sets $X$ and $Y$ of vertices satisfying 
\begin{equation}\label{eq:doubleUsum}
X = \bigcup_{B \in S} B \quad \text{ with } S \subseteq d \setminus d^t, \qquad 
\text{and } \qquad  Y \subseteq [k] \setminus X.\end{equation}
However, if  $B=\{j_1, \dots, j_r\} \in d^t$,   then isolating the factors in positions $j_i, \dots, j_r$ in $(\pi^{\otimes k} \circ d) \cdot \ww$ yields 
\begin{align*}
\pi^{\otimes k} \cdot \sum_{\ell = 1}^n v_\ell^{\otimes r} 
	&= (\short{\frac{-1}{n}})^r w^{\otimes r} + \sum_{\ell = 2}^n (w_\ell - \short{\frac1n}w)^{\otimes r}\\
	&= n(\short{\frac{-1}{n}})^r w^{\otimes r} 
		+ \sum_{a=0}^{r-1} \sum_{\sigma\in S_r/(S_a\times S_{r-a})} \sigma \cdot \left((\short{\frac{-1}{n}})^a w^{\otimes a}\otimes\left(\sum_{\ell=2}^n w_\ell^{\otimes (r-a)}\right)\right),
	\end{align*}
	where $\sigma \in S_r/(S_a\times S_{r-a})$ acts by permuting the $r$ factors, stabilizing the relative positions of the factors of $w^{\otimes a}$ and the the factors of $w_\ell^{\otimes (r-a)}$.

So the term $w^{\otimes r}$ appears with an extra factor of $n$. Thus, in general
 \begin{equation} 
 \pi^{\otimes k}\circ d = \sum_{X, Y \atop U = X \cup Y} (-1)^{|U|}\frac1{n^{|U \cap [k]| - n(Y)}}[d_{U}],
\end{equation}
where $$n(Y) = |\{ B \in d^t ~|~ B \subseteq Y\}|.$$
\end{proof}

As noticed before Lemma \ref{lem:DominantTerms1},  a diagram $d_U$ may arise non-uniquely as a function of $U$. As before, an isolation of a   diagram $d$ is a refinement $\hat{d}$ gotten from $d$ by isolating vertices. If those vertices satisfy the condition\\
\centerline{if any vertex of $B^b$ is isolated, then all vertices of $B$ are isolated,}
we call $\hat{d}$ a \emph{viable isolation}.

\begin{lemma}\label{lem:DominantTerms2}
Suppose $\hat{d}$ is a viable isolation of $d$ with set of isolated vertices $U$, and let $c$ be the coefficient of $[\hat{d}]$ after collecting like-terms in \eqref{eq:DominantTerms1}.  Then $c$ falls into the following cases: 
\begin{enumerate}
\item Non-unique terms, i.e. $c$ is the sum of multiple terms:
\begin{enumerate}
\item If $B \in d$ has exactly one vertex in $[k']$, and the vertices of $B$ are isolated in $\hat{d}$ , then $c = 0$.
\item Assume there is no block as in (a) completely isolated in $\hat d$, and that $d^t$ is nonempty. If $\cB = \{B_1, \dots, B_\ell\} \subseteq d^t$, and all blocks in $\cB$ but no other blocks of $d^t$ are isolated in $\hat{d}$, then 
$$c= (-1)^{|U|}\left( \frac1{n^{|U \cap [k]| - \ell}}\right)\left( \sum_{\mathcal{B}' \subseteq \{B_1, \dots, B_\ell\}} \left((-1)^{|\mathcal{B}'|} \prod_{B \in \mathcal{B}'}|B|\right)\right)$$
 where $\prod_{B \in\cB'}|B| = 1$ when $\cB' = \emptyset$. 

\end{enumerate}
\item Unique terms: If $\hat{d}$ is not one of the cases in (1), then $\hat{d}$ appears uniquely with coefficient $c=(-1)^{|U|}\frac{1}{n^{|U \cap [k]|}}$ where $U$ is the set of isolated vertices of $\hat{d}$.  In particular, if the isolated vertices of $\hat{d}$ are all in $[k']$, then then $c=(-1)^{\#\{\text{isolated vertices}\}}$
\end{enumerate}
If $U\cap [k]\neq \emptyset$, then  $\lim_{n \to \infty} c= 0$.
\end{lemma}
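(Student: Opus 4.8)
The plan is to extract $c$ directly from the uncollected expansion obtained in the proof of Lemma~\ref{lem:DominantTerms1}, namely
\[
\pi^{\otimes k}\circ d \;=\; \sum_{\substack{X,\,Y\\ W=X\cup Y}}(-1)^{|W|}\,\frac{1}{n^{\,|W\cap[k]|-n(Y)}}\,[d_{W}],
\]
where $X$ runs over unions of blocks of $d\setminus d^t$, $Y$ over subsets of $[k]\setminus X$, and $n(Y)=\#\{B\in d^t : B\subseteq Y\}$. Since $[d_W]$ is the operator $[\hat d]$ precisely when $d_W=\hat d$, the coefficient $c$ in \eqref{eq:DominantTerms1} is the sum of $(-1)^{|W|}n^{-(|W\cap[k]|-n(Y))}$ over all admissible pairs $(X,Y)$ with $d_{X\cup Y}=\hat d$. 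So the first task is to characterize those pairs. Comparing $d_W$ with $\hat d=d_U$ block by block of $d$: if a block $B$ is not entirely contained in $U$ then $B\setminus U$ is a block of $\hat d$ of size $\ge 2$ (it cannot have size $1$, since $U$ is the \emph{full} set of isolated vertices of $\hat d$, so no $B$ can meet $U$ in exactly $|B|-1$ points), and matching forces $W\cap B=U\cap B$; if instead $B\subseteq U$, then $d_W$ restricts to all singletons on $B$ iff $|W\cap B|\ge|B|-1$, i.e.\ $W\cap B$ equals $B$ or $B\setminus\{v\}$ for a single $v\in B$.

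Next I would combine this with the structural shape of $(X,Y)$ to enumerate, for each entirely-isolated block $B$, the admissible ``modes'' of $W\cap B$, noting first that the summand $(-1)^{|W|}n^{-(|W\cap[k]|-n(Y))}$ is multiplicative over the blocks of $d$ and that the choices at distinct blocks are independent, so $c$ factors as a product of local contributions: each block not entirely in $U$ contributes the fixed factor $(-1)^{|U\cap B|}n^{-|U\cap B\cap[k]|}$, and each entirely-isolated block $B$ contributes a local sum over its modes. Those modes are: the single mode $B\subseteq X$ if $|B^b|\ge 2$; exactly the two modes $B\subseteq X$ and $W\cap B=B^t$ (lone bottom vertex discarded, placed through $Y$) if $|B^b|=1$; and, if $B\in d^t$, the $|B|+1$ modes $W\cap B=B$ (putting $B\subseteq Y$, hence raising $n(Y)$) or $W\cap B=B\setminus\{v\}$ for one of the $|B|$ vertices $v$ (not raising $n(Y)$).

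The three cases then drop out of the local sums. For an entirely-isolated block $B$ with $|B^b|=1$ the two modes contribute $(-1)^{|B|}n^{-|B^t|}$ and $(-1)^{|B^t|}n^{-|B^t|}$, which cancel since $|B|=|B^t|+1$; hence $c=0$, which is case~(1)(a) --- equivalently, toggling the mode of such a block is a sign-reversing involution on the admissible pairs. For an entirely-isolated $B\in d^t$ the $|B|+1$ modes sum to the local factor $(|B|-1)(-1)^{|B|-1}n^{-(|B|-1)}$; multiplying these over the isolated top-blocks $B_1,\dots,B_\ell$ and gathering the remaining fixed factors gives, after the identity $\sum_{\mathcal B'\subseteq\{B_1,\dots,B_\ell\}}(-1)^{|\mathcal B'|}\prod_{B\in\mathcal B'}|B|=\prod_{i=1}^{\ell}(1-|B_i|)$, exactly the formula of case~(1)(b). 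If no entirely-isolated block has $|B^b|\le 1$, every such block has the single mode $W\cap B=B$ contributing $(-1)^{|B|}n^{-|B^t|}$, the full product telescopes to $c=(-1)^{|U|}n^{-|U\cap[k]|}$, which is case~(2) (and collapses to $(-1)^{\#\{\text{isolated vertices}\}}$ when $U\subseteq[k']$).

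The final assertion is then immediate: if $U\cap[k]\ne\emptyset$, then case~(1)(a) gives $c=0$; case~(2) gives $c=(-1)^{|U|}n^{-|U\cap[k]|}$ with $|U\cap[k]|\ge 1$, so $c\to 0$; and in case~(1)(b) we have $U\cap[k]\supseteq B_1\cup\cdots\cup B_\ell$ with every $|B_i|\ge 2$ (as $d$ has no isolated vertices), so $|U\cap[k]|\ge\sum_i|B_i|\ge 2\ell$ and the exponent $|U\cap[k]|-\ell\ge\ell\ge 1$, again forcing $c\to 0$. I expect the genuine difficulty to lie in the bookkeeping of the middle steps --- pinning down the mode list for each type of entirely-isolated block and, for case~(1)(b), correctly accounting for how the $n(Y)$-correction interacts with the ``collapse'' whereby $W\cap B=B\setminus\{v\}$ still produces the fully split $\hat d$ --- rather than in any single subtle point; once the block-wise factorization of $c$ is in hand, each case is a one-line computation.
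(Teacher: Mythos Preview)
Your proposal is correct and follows essentially the same route as the paper: both extract the coefficient from the double sum derived in Lemma~\ref{lem:DominantTerms1} and classify the admissible pairs $(X,Y)$ yielding the same isolation $\hat d$. Your block-wise factorization of the weight (listing the local ``modes'' for each entirely-isolated block and multiplying) is a tidier packaging of the paper's direct case analysis --- the paper handles (1)(a) by the same sign-reversing pairing you describe, and handles (1)(b) by explicitly enumerating the subsets $\mathcal B'$ rather than first collapsing each top-block to its local factor $(|B_i|-1)(-1)^{|B_i|-1}n^{-(|B_i|-1)}$ --- but the underlying computation is identical.
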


\begin{proof}
Use the same notation as in Lemma \ref{lem:DominantTerms1}, and continue from its proof. 

In the first case (1) note that a diagram $[d_U]$ can appear multiple times, if selecting different sets $X$ and $Y$ breaks up an entire block in different ways. This can happen in two cases. 
\begin{enumerate}
\item[(a)] \emph{ If some $B \in d\setminus d^t$ has the property that $B^b$ has a single element, then including $B$ in $X$ or including $B^t$ in $Y$ result in the same $d_U$. }

In this case, pair each $(X, Y)$ such that $B \subset X$ with $(X', Y')$ given by 
$$X' = X \setminus \{B\}, \quad \text{ and } Y' = Y\cup B^t $$
so that $[d_{X \cup Y}]=[d_{X' \cup Y'}]$. These terms occur with equal but opposite coefficients, since 
$$|(X \cup Y) \cap [k]| = |(X' \cup Y')\cap [k]|  \qquad \text{ but } \qquad |X' \cup Y'| = |X \cup Y|-1,$$
and so cancel.

\item[(b)] \emph{ If $B \in d^t$, then the diagrams with $B \subseteq Y$ are exactly equal to those with one vertex of $B$ removed from $Y$ ($Y' = Y \setminus \{j\}$ for some $j \in B$).} 

Fix a collection of blocks  $B_1, \dots, B_\ell$ in $d^t$, and let $U$ be some set of the form $U = X \cup Y$ as in \eqref{eq:doubleUsum}, such that $\bigcup_{1 \leq i \leq \ell} B_i \subseteq U$, but $d_U$ has no other blocks in $d^t$ completely broken up. Then $[d_U]$ will appear with coefficient 
\begin{align*}
	c = (-1)^{|U|}\left( \frac1{n^{|U \cap [k]| - \ell}}\right)  &\qquad \text{ for the set $U$},\\
	\left((-1)^{|\mathcal{B}'|} \prod_{B \in \cB'}|B|\right)c& \qquad\text{ for each non-empty}  \cB' \subseteq \{B_1, \dots, B_\ell\}
\end{align*}
where the second value counts the cases $U= X \cup Y'$, where $Y'$ removes one element from each $B \in \mathcal{B}$ from $Y$:  for each $B$, there are $|B|$ ways to accomplish this, and each resulting $Y'$ has $n(Y') = n(Y) -|\cB'|$, $|U'| = |U| - |\cB'|$, and $|U' \cap [k]| = |U \cap [k]| - |\cB'|$. So, collecting like-terms, the coefficient on $[d_U]$ is 
$$c_U = (-1)^{|U|}\left( \frac1{n^{|U \cap [k]| - b}}\right)\left( \sum_{\cB'\subseteq \{B_1, \dots, B_\ell\}} \left((-1)^{|\cB'|} \prod_{B \in\cB'}|B|\right)\right) ,$$
 where $\prod_{B \in\cB'}|B| = 1$ when $\cB' = \emptyset$.

\end{enumerate} 
If $U \subseteq [k'] $, the term $[d_U]$ appears exactly once; the isolated vertices are unions of blocks in $d^b$, and so coefficient is $(-1)^{|U|}$. 
Otherwise, $[d_U]$ appears exactly once, with coefficient $(-1)^{|U|}\frac{1}{n^{|U \cap [k]|}}$. In general if $U\cap[k]\neq \emptyset$, then $\lim_{n \to \infty} c_U = 0$. 
\end{proof}

\begin{example}\label{ex:DominantTerms} Let $d$ be the diagram 
$$\begin{tikzpicture}[scale=.75]
	\Nodes[1][4];
	\Nodes[0][4];
	\draw (1,0)--(1,1)--(2,1)-- (1,0);
	\draw (3,1)--(4,1) (4,0)--(3,0)--(2,0) ;
	\node[left] at (1, .5) {\small $B_1$};
	\node[right] at (4, 1) {\small $B_3$};
	\node[right] at (4, 0) {\small $B_2$};
	\foreach \x in {1, ..., 4}{
	\node at (\x, 1.3) {\tiny $\x$};
	\node at (\x, -.3) {\tiny $\x'$};
	}
	\end{tikzpicture} 
	\begin{tikzpicture}[scale=.75]
	\node[left] at (.5, .5) {,\qquad so that \quad $d_{B_2} = $};
	\Nodes[1][4];
	\Nodes[0][4];
	\draw (1,0)--(1,1)--(2,1)-- (1,0);
	\draw (3,1)--(4,1); 
	\foreach \x in {1, ..., 4}{
	\node at (\x, 1.3) {\tiny $\x$};
	\node at (\x, -.3) {\tiny $\x'$};
	}
	\node at (4.3,.47) {.};
	\end{tikzpicture}$$
For the three blocks labeled $B_1, B_2$, and $B_3$,
	$$B_1^t = \{1,  2\}, \quad  B_1^b = \{1'\}, \quad  B_2^t = \emptyset, \quad  B_2^b = \{2',   3',   4'\}, \quad  B_3^t = \{3,  4\}, \quad  \text{ and } B_3^b = \emptyset.$$ 
So $d^t = \{B_3\}$, $d^b = \{B_2\}$, and the sum in \eqref{eq:bigSsum} is over the sets 
$$S = \emptyset, \{B_1\},  \{B_2\}, \text{ or }  \{B_1, B_2\}.$$ 
The summand where $S = \{B_1, B_2\}$ is 
\begin{equation}\label{eq:domTermEx1}
(-1)^{3+3} \sum_{i \in [n]} v_1 \otimes v_1 \otimes v_i \otimes v_i,
\end{equation}
which projects via $\pi^{\otimes k}$ to 
\begin{equation}\label{eq:domTermEx2}
\frac{1}{n^2} w \otimes w \otimes \left( \sum_{i =2, \dots, n} (w_i - \frac{1}{n}w) \otimes (w_i - \frac{1}{n}w) \right).\end{equation}
The corresponding summands when $S = \{B_1\}$ are $-\delta_{i_2, i_3} \delta_{i_3, i_4}$ times the expressions in \eqref{eq:domTermEx1} and \eqref{eq:domTermEx2}. 
The terms where $U \subseteq [k']$ are when $S = \emptyset$ or $\{B_2\}$, and expand to 
$$(-1)^0 [d] + (-1)^3 [d_{B_2}].$$

\end{example}

\begin{theorem}
A basis for $QP_k(n)$ is given by 
\[ \{ \bar{d} \, |\, d \in \cD\}  \qquad \text{ where } \quad  \cD = \{ \text{ diagrams $d$ without isolated vertices}\}\]
as before.
\end{theorem}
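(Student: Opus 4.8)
The plan is to establish two things: that $\{\bar d \mid d \in \cD\}$ spans $QP_k(n)$, and that it is linearly independent; since we already know a spanning set (Lemma~\ref{elements}) consisting of all $\bar d$ for $d$ a diagram, and $\bar d = 0$ whenever $d$ has an isolated vertex, spanning is immediate. So the real content is linear independence of the $\bar d$ with $d \in \cD$, and, as a sanity check, that the count $|\cD|$ matches $\dim QP_k(n)$.

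For linear independence, the idea is to use Lemma~\ref{lem:DominantTerms1} and exploit a triangularity argument with respect to the partial order on diagrams given by refinement (isolation). Introduce the total collection of operators $\{[d'] \mid d' \text{ any diagram}\}$ on $W^{\otimes k}$; since these are obtained from the partition-algebra basis of $P_k(n-1)$ by the change of basis $f$ of \eqref{eq:BracketD}, and $f$ is an isomorphism, the set $\{[d'] : d' \text{ a $k$-partition diagram}\}$ is linearly independent in $\End(W^{\otimes k})$ precisely when $n-1 \geq 2k$, which we assume throughout (the centralizer regime $n \geq 2k$; one should state the mild hypothesis $n-1\geq 2k$ or note that the dimension count is stable for $n$ large, matching the abstract's ``formula in terms of Bell numbers''). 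Lemma~\ref{lem:DominantTerms1} says $\bar d = [d] + \sum_U c_U [d_U]$ where every $d_U$ is a \emph{proper} viable isolation of $d$, hence strictly finer than $d$ in the refinement order. Therefore, ordering $\cD$ by any linear extension of the refinement order (finest first), the ``transition matrix'' expressing the vectors $\{\bar d\}_{d\in\cD}$ in terms of the linearly independent family $\{[d']\}$ is unitriangular: each $\bar d$ has leading term $[d]$ (coefficient $1$) plus terms $[d_U]$ with $d_U$ strictly finer. I would note that although some $d_U$ may themselves have isolated vertices (so lie outside $\cD$), this does no harm: the $[d_U]$ are still part of the linearly independent family $\{[d']\}$, and they never equal $[d'']$ for any $d'' \in \cD$ that is coarser-or-equal to $d$. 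Concretely, suppose $\sum_{d \in \cD} a_d \bar d = 0$; pick a maximal (coarsest) $d$ in the support; the coefficient of $[d]$ on the left is $a_d$ (no other $\bar{d'}$ in the sum can contribute $[d]$, since that would require $d$ to be an isolation of $d'$ with $d' \in \cD$, forcing $d' = d$ as $d$ has no isolated vertices and is maximal). Hence $a_d = 0$, a contradiction, so all $a_d = 0$.

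After linear independence is in hand, it remains to match dimensions — or rather, linear independence already gives $\dim QP_k(n) \geq |\cD|$, and Lemma~\ref{elements} gives $\dim QP_k(n) \leq |\cD|$, so equality holds and $\{\bar d \mid d \in \cD\}$ is a basis. I would close by remarking that $|\cD|$ is the number of set partitions of $2k$ elements with no block of size one, which can be written via inclusion–exclusion in terms of Bell numbers: $|\cD| = \sum_{j=0}^{2k} (-1)^j \binom{2k}{j} B(2k-j)$ (the count of set partitions of an $m$-set with no singletons; here $m = 2k$), recovering the dimension formula promised in the introduction.

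The main obstacle I anticipate is bookkeeping in the triangularity step: one must be careful that the ``leading term'' $[d]$ of $\bar d$ genuinely cannot be produced by any other $\bar{d'}$ with $d' \in \cD$ in the span, and that the refinement order interacts correctly with the constraint defining viable isolations (if a bottom vertex of a block is isolated, the whole block is). The key point making this clean is that $d \in \cD$ has \emph{no} isolated vertices, so the only way $d$ arises as an isolation $d'_U$ of some $d' \in \cD$ is $d' = d$, $U = \emptyset$ — there is genuinely nothing to isolate while staying inside $\cD$ and reaching a coarser diagram. I would also double-check the edge cases flagged in Lemma~\ref{lem:DominantTerms2} (blocks with a single bottom vertex giving coefficient $0$, blocks of $d^t$ producing the alternating-product coefficients): these only affect the $c_U$ for \emph{proper} isolations and never touch the coefficient $1$ on $[d]$, so they are irrelevant to the independence argument, though worth a sentence to reassure the reader.
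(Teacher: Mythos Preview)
Your proposal is correct and follows essentially the same route as the paper: spanning is inherited from Lemma~\ref{elements}, and linear independence comes from Lemma~\ref{lem:DominantTerms1} via the observation that for $d \in \cD$ the bracket term $[d]$ appears (with coefficient $1$) only in the expansion of $\bar d$ and in no other $\bar{d'}$ with $d' \in \cD$, together with the linear independence of the $[d']$ coming from $P_k(n-1)$ when $n-1 \geq 2k$. Your framing via a refinement-order triangularity is slightly more elaborate than needed (the maximality step is superfluous once you note that any proper isolation has an isolated vertex and hence lies outside $\cD$), but this is exactly the mechanism the paper uses.
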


\begin{proof} 
We have already established $\{ \bar{d} \, |\, d \in \cD\}$ as a spanning set.  It remains to show that it is also linearly independent. 
 By Lemma \ref{lem:DominantTerms1}, for each diagram $d$ without singletons, there is exactly one element $\bar{d}$ which has $[d]$ with non-zero coefficient the expansion of its projection (namely, $d$ itself). Since the diagrams form a basis of $P_k(n-1)$ when $n-1 \geq 2k$, and the elements $[d] \in \End(W^{\otimes k})$ generate an isomorphic subalgebra when $n-1 \geq 2k$, this implies that $\{ \bar{d} \, |\, d \in \cD\} $ 
is linearly independent.
\end{proof}

We obtain as easy consequences of this theorem both dimension formulas and a result concerning diagram multiplication.
\begin{corollary}
 If $n\geq 2k+1$, then the dimension of $QP_k(n)$ is the number of set partitions of $2k$ without blocks of size one.  This number is given by
  \[\sum\limits_{j=1}^{2k} (-1)^{j-1}B(2k-j)+1, \qquad \text{ where $B(r)$ is the Bell number.}\]
\end{corollary}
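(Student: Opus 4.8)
The plan is to deduce both claims directly from the preceding theorem, which identifies a basis of $QP_k(n)$ (for $n \geq 2k+1$, i.e.\ $n-1 \geq 2k$) with the set $\cD$ of $k$-partition diagrams having no isolated vertices. Since a $k$-partition diagram is the same data as a set partition of $[k]\cup[k']$, a set with $2k$ elements, the first sentence is immediate: $\dim QP_k(n) = |\cD|$ equals the number of set partitions of a $2k$-element set with no block of size one. So the only real content is the closed formula $\sum_{j=1}^{2k}(-1)^{j-1}B(2k-j)+1$ for that count.

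To prove the formula I would use inclusion--exclusion on the singleton blocks. Let $r = 2k$. A set partition of an $r$-set with \emph{no} singleton is counted by subtracting, from the total number $B(r)$ of all set partitions, those with at least one singleton. For a fixed $j$-element subset $J \subseteq [r]$, the number of set partitions in which every element of $J$ is a singleton block is $B(r-j)$ (partition the remaining $r-j$ elements arbitrarily, then add the $j$ singletons). By inclusion--exclusion, the number of set partitions with no singleton is
\[
\sum_{j=0}^{r} (-1)^j \binom{r}{j} B(r-j)?
\]
This is not quite the stated formula, so the key observation is a summation identity: the number of partitions of $[r]$ with \emph{no block of size exactly one} can be written more economically. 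I would instead argue as follows. Let $a(r)$ denote the number of set partitions of $[r]$ with no singleton. Condition on the block $B_1$ containing the element $1$: it has size $m \geq 2$, so there are $\binom{r-1}{m-1}$ choices for $B_1$ and $a(r-m)$ ways to partition the rest, giving $a(r) = \sum_{m=2}^{r}\binom{r-1}{m-1} a(r-m)$ with $a(0)=1$, $a(1)=0$. Separately, the Bell numbers satisfy $B(r) = \sum_{m=1}^{r}\binom{r-1}{m-1} B(r-m)$. From these two recurrences one derives, by a short induction on $r$, the identity $a(r) = \sum_{j=0}^{r}(-1)^j B(r-j) \cdot [\text{something}]$; more directly, one shows $B(r) = \sum_{i=0}^{r}\binom{r}{i} a(i)$ (choose the set of singletons, then put no singletons on the complement), and then Möbius-inverts the binomial transform to get $a(r) = \sum_{i=0}^{r}(-1)^{r-i}\binom{r}{i}B(i)$. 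Matching this against the target requires the further classical simplification that this alternating binomial sum telescopes: using $\binom{r}{i} = \binom{r-1}{i} + \binom{r-1}{i-1}$ repeatedly collapses the double-indexed sum to the single sum $\sum_{j=1}^{r}(-1)^{j-1}B(r-j) + 1$, with the trailing $+1$ accounting for the $j = r$ (empty partition) boundary term and the sign bookkeeping.

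Concretely, the cleanest route I would write up: (i) state $\dim QP_k(n) = a(2k)$ from the basis theorem; (ii) prove $B(r) = \sum_{i=0}^{r}\binom{r}{i}a(i)$ combinatorially by splitting off the singleton blocks; (iii) invert to $a(r) = \sum_{i=0}^{r}(-1)^{r-i}\binom{r}{i}B(i)$; (iv) re-index and use a telescoping/Abel-summation argument on the binomial coefficients to rewrite this as $\sum_{j=1}^{r}(-1)^{j-1}B(r-j) + 1$, then set $r = 2k$. Alternatively, and perhaps more transparently for a reader, one can verify the formula is correct by checking it satisfies the recurrence for $a(r)$ together with the base case $a(0) = 1$ (note $\sum_{j=1}^{0} = 0$, leaving $1$). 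I expect step (iv)---turning the binomial-transform expression into the particular telescoped form with the isolated $+1$---to be the only fiddly part; it is a routine but sign-sensitive manipulation, and I would present it either as a one-line appeal to the standard identity relating the binomial transform of $B$ to this alternating partial-sum form, or by the direct induction just described. Everything else is an immediate consequence of the basis theorem already proved.
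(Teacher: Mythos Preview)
Your deduction of the first sentence from the basis theorem is exactly what the paper does. For the closed formula, however, the paper's argument is considerably shorter than either of your proposed routes: it observes the two--term recurrence
\[
a(r+1)+a(r)=B(r),\qquad a(1)=0,\ a(2)=1,
\]
and then simply checks that the expression $\sum_{j=1}^{r}(-1)^{j-1}B(r-j)+1$ satisfies it (unrolling the recurrence gives $a(r)=\sum_{j=1}^{r}(-1)^{j-1}B(r-j)+(-1)^{r}a(0)$, and $r=2k$ is even). The recurrence itself has a one--line bijective proof: given any partition of $[r]$, if it has no singletons leave it alone (contributing to $a(r)$); otherwise merge all its singletons together with a new element $r+1$ into one block (contributing to $a(r+1)$). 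You never isolate this simple recurrence, instead working with the heavier $a(r)=\sum_{m\ge 2}\binom{r-1}{m-1}a(r-m)$ and the binomial transform.

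Your main route also has a genuine soft spot at step~(iv). The identity you state there,
\[
\sum_{i=0}^{r}(-1)^{r-i}\binom{r}{i}B(i)\;=\;\sum_{j=1}^{r}(-1)^{j-1}B(r-j)+1,
\]
is \emph{false} for odd $r$ (e.g.\ $r=3$ gives $1$ on the left and $3$ on the right); the correct constant is $(-1)^{r}$, not $+1$. Since the corollary only needs $r=2k$ this does not kill the argument, but the telescoping you sketch (``use $\binom{r}{i}=\binom{r-1}{i}+\binom{r-1}{i-1}$ repeatedly'') will not by itself produce a parity--independent $+1$, so as written the step is a gap. The cleanest fix is precisely the paper's: derive $a(r+1)+a(r)=B(r)$ (which follows immediately from your relation $B(r)=\sum_i\binom{r}{i}a(i)$, or from the bijection above) and iterate.
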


This can be easily checked by noticing that the number of set partitions without singletons, $a(r)$, satisfies the recurrence
$a(r+1) + a(r) = B(r)$ subject to $a(1) =0$ and $a(2)=1$, and then that the formula satisfies this recurrence.

\begin{corollary}\label{cor:barProduct}
Given $d_1$ and $d_2$ any two diagrams without singleton vertices, then 
\[ \bar{d_1}\bar{d_2} = \sum_{d \in \cD \atop d\leq  d_1d_2} c_{d_1, d_2}^{d}\bar d,\]
where $d\leq d'$ if every block of $d'$ is the union of blocks of $d$, i.e.\ $d$ is a {\em refinement} of $d'$.
 \end{corollary}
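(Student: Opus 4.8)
The plan is to reduce the computation of $\bar d_1 \bar d_2$ to the already-understood product structure of $P_k(n-1)$ via the bracket map, and then express the result back in the $\bar d$ basis using the dominance results of Lemma~\ref{lem:DominantTerms1}. First I would recall from the proof of the basis theorem that $\bar d = [d] + \sum_U c_U[d_U]$, where each $d_U$ is a viable isolation of $d$ and in particular is a refinement of $d$; moreover every diagram appearing in this expansion is a refinement of $d$. So on the one hand $\bar d_1 \bar d_2$ lies in the span of products $[d_U'][d_V'']$ where $d_U'$ refines $d_1$ and $d_V''$ refines $d_2$; on the other hand, since $d\mapsto[d]$ is an algebra homomorphism (from $P_k(n-1)$, which has the diagrams as a basis when $n-1\geq 2k$), we have $[d_U'][d_V''] = [d_U' \cdot d_V'']$, and the partition-diagram product $d_U'\cdot d_V''$ is a single scalar multiple (a power of $n-1$ from closed loops) of an honest partition diagram.

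The key structural fact I would isolate is: if $d'$ is a refinement of $d_1$ and $d''$ is a refinement of $d_2$, then the diagram underlying $d'\cdot d''$ is a refinement of the diagram underlying $d_1\cdot d_2$. This is a purely combinatorial statement about concatenation of set partitions: refining the bottom of $d_1$ and the top of $d_2$ can only break up connected components in the concatenation, never merge new ones, so the connected components of $d'\cdot d''$ subdivide those of $d_1\cdot d_2$ (after removing isolated middle vertices — but isolations only add isolated vertices, and loops contribute scalars, not new identifications). I would state and check this lemma carefully, paying attention to how isolated middle vertices and closed components interact. Granting it, every $[d_U'\cdot d_V'']$ appearing in $\bar d_1 \bar d_2$ is, up to scalar, $[e]$ for some diagram $e$ that refines $d_1 d_2$ (or has isolated vertices).

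Next I would re-expand in the $\bar d$ basis. By Lemma~\ref{lem:DominantTerms1} and the triangularity argument from the basis theorem, the change of basis between $\{[e] : e\in\cD,\ e \text{ refines } d_1d_2\} \cup \{[e'] : e' \text{ has isolated vertices, } e' \text{ a refinement of } d_1 d_2\}$ and $\{\bar e : e\in\cD\}$ is unitriangular with respect to the refinement partial order restricted to refinements of $d_1d_2$: each $\bar e$ has leading term $[e]$ and lower terms $[e_U]$ which are refinements of $e$, hence of $d_1 d_2$; conversely, each $[e]$ (modulo diagrams with isolated vertices) equals $\bar e$ minus a combination of $\bar{e'}$ with $e' < e$. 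Crucially, $\bar{e'} = \pi^{\otimes k}\circ e'$ vanishes whenever $e'$ has an isolated vertex, by Lemma~\ref{elements}, so the terms $[e']$ with isolated vertices — which also arise as isolations of refinements of $d_1d_2$, hence are themselves "refinements with extra isolations" — contribute only through their own expansions in terms of $\bar{(\cdot)}$, and those expansions only involve diagrams in $\cD$ that refine $d_1d_2$. Putting these two triangular changes of basis together shows $\bar d_1\bar d_2$ is a $\C$-linear combination of $\bar d$ with $d\in\cD$ and $d\leq d_1 d_2$, which is exactly the claim; the coefficients $c^d_{d_1,d_2}$ are whatever scalars (rational functions in $n$, coming from the powers of $n-1$ for loops and the $1/n^{|U\cap[k]|}$ factors) this unwinding produces.

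I expect the main obstacle to be the bookkeeping in the combinatorial refinement lemma about concatenation: one must be careful that (i) closed loops formed in $d'\cdot d''$ correspond to closed loops or connected pieces of $d_1\cdot d_2$ and only contribute scalar powers of $n-1$ without spoiling the refinement relation, and (ii) that isolating a vertex in the middle row of the concatenation genuinely produces a refinement of the non-isolated diagram rather than something incomparable. A secondary subtlety is confirming that the support condition "$d\leq d_1 d_2$" is the sharp one — i.e.\ that no diagram strictly coarser than $d_1 d_2$, or incomparable to it, sneaks in — but this follows once the refinement lemma is in hand, since every intermediate diagram $[e]$ in sight refines $d_1 d_2$ and the triangular re-expansion never introduces coarser diagrams.
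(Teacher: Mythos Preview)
Your overall strategy coincides with the paper's: expand $\bar d_1$ and $\bar d_2$ via Lemma~\ref{lem:DominantTerms1}, use that $d\mapsto[d]$ is a homomorphism together with the combinatorial fact that an isolation of $d_1$ multiplied by an isolation of $d_2$ refines $d_1d_2$, and then re-express in the $\bar d$ basis. The one soft spot is your final re-expansion. The ``change of basis'' you describe is between two sets of different sizes, and the appeal to Lemma~\ref{elements} does not dispose of the leftover bracket terms $[e']$ with $e'\notin\cD$: that lemma says $\bar{e'}=0$, which is a statement about $\pi^{\otimes k}\circ e'$ acting via $P_k(n)$, not about $[e']$ acting via $P_k(n-1)$. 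So your sentence that such $[e']$ ``contribute only through their own expansions in terms of $\bar{(\cdot)}$'' is not justified as written.

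The paper closes this gap cleanly by invoking closure of $QP_k(n)$ rather than attempting a full triangular inversion. Since $\bar d_1\bar d_2\in QP_k(n)$, it already has \emph{some} expansion $\sum_{d\in\cD}\alpha_d\,\bar d$. Now compare the coefficient of $[d]$, for each fixed $d\in\cD$, on both sides: in the bracket expansion $\bar d_1\bar d_2=\sum_{e\le d_1d_2}c^e_{d_1,d_2}[e]$ it is $c^d_{d_1,d_2}$, while in $\sum_{d'\in\cD}\alpha_{d'}\bar{d'}$ it is $\alpha_d$, because by Lemma~\ref{lem:DominantTerms1} the term $[d]$ appears in $\bar{d'}$ with coefficient $\delta_{d,d'}$ for $d,d'\in\cD$ (all other terms in $\bar{d'}$ are isolations, hence not in $\cD$). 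Thus $\alpha_d=c^d_{d_1,d_2}$, which vanishes unless $d\le d_1d_2$. This avoids having to say anything at all about the isolated-vertex bracket terms.
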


\begin{proof}
{If $d_1, d_2 \in \cD$, and $d_1', d_2'$ are isolations of $d_1, d_2$, then $d_1'd_2' \leq d_1 d_2$.} By Lemma \ref{lem:DominantTerms1} 
$$\bar d_1 = [d_1] + \sum_{U} a_U [(d_1)_U] \qquad \text{ and } \qquad \bar d_2 = [d_2] + \sum_{V} b_V [(d_2)_V]$$
where sets $U$ and $V$ determine viable isolations of $d_1$ and $d_2$, and coefficients $a_U, b_V$ are determined. So since $f$ is a homomorphism, 
\begin{equation} \label{bracketproduct}
\bar{d_1}\bar{d_2} = \sum_{d\leq  d_1d_2} c_{d_1, d_2}^{d} [d].
\end{equation}
Since $QP_k(n)$ is closed under composition, we can also expand $\bar{d_1}\bar{d_2} $ in the basis $\{ \bar d ~|~ d \in \cD\}$. 
Again by Lemma \ref{lem:DominantTerms1}, for each $d \in \cD$, $[d]$ appears with non-zero coefficient in the bracket expansions of the elements of this basis: it appears with coefficient 1 in $\bar d$ and with coefficient 0 in $\bar d'$ for all $d \neq d' \in \cD$. Therefore, 
$$\bar{d_1}\bar{d_2} = \sum_{d\leq  d_1d_2 \atop 
d \in \cD} c_{d_1, d_2}^{d}\bar d$$
(where $c_{d_1, d_2}^{d}$ takes the same value as in \eqref{bracketproduct}). 
\end{proof}

\subsection{The generic quasi-partition algebra}
By Lemma \ref{lem:DominantTerms2} and the multiplication rules for $P_k(n-1)$, the coefficients $c_{d_1, d_2}^{d}=c_{d_1, d_2}^{d}(n)$ are well-defined rational functions of $n$ (with poles only at 0). Now fix a non-zero indeterminant $x$. Using this multiplication determined in Corollary \eqref{cor:barProduct}, define the \emph{general quasi-partition algebra} $QP_k(x)$ formally as 
$$QP_k(x) = \C(x)\text{-span}\,\cD \qquad \text{ with multiplication } \quad  d_1  d_2 = \sum_{d \in \cD \atop d\leq  d_1d_2} c_{d_1, d_2}^{d}(x)  d.$$

Note that when you specialize $x$ to an integer greater than $2k+1$, $QP_k(x)$ is isomorphic both the the centralizer of $S_x$ in $\End(W^{\otimes k})$ and to a subalgebra of $P_k(x-1)$.


\section{Generators and Relations}

In this section we give a set of generators for $QP_k(n)$.  We first give a set of generators for the partition diagrams that do not have singleton vertices.  We then proceed to show that the image of $\pi^{\otimes k}$ of these generators form a generating set for $QP_k(n)$. 

It was shown in \cite[Lemma 3.1] {Tanabe}
 and \cite[Lemma 5.2]{Orellana}
 that the generators $ s_i, e_i,$ and  $b_i,$  for $i=1, \dots, k-1,$ generate those diagrams where all blocks contain an even number of nodes. The quasi-partition algebra additionally contains diagrams which has blocks with an odd number of vertices (and therefore an even number of odd blocks). 
To generate these additional diagrams, we will also need the generators 
$$\begin{tikzpicture}[yscale=.5, xscale=.4]
	\node[left] at (-2,.5) {$t_i = b_i p_{i+1} b_{i+1} = $};
	 \foreach \x in {-2,0,1,...,4,6} {
	 \filldraw [black] (\x, 0) circle (2pt);
	 \filldraw [black] (\x, 1) circle (2pt);
	 }
	\Nodes[0][4];
	\draw (1,0)--(1,1)--(2,1)-- (1,0);
	\draw (2,0)--(3,0)--(3,1)--(2,0) ;
	\draw (4,0)--(4,1) (0,1)--(0,0) (-2,1)--(-2,0) (6,1)--(6,0);
	\node at (-1,.5) {$\dots$};
	\node at (5,.5) {$\dots$};
	\node at (1,1.5) {$i$};
\ShiftX{19cm}
	\node[left] at (-2,.5) { and  \quad $h_i = b_i e_{i+1} p_i b_i = $};
	 \foreach \x in {-2,0,1,...,4,6} {
	 \filldraw [black] (\x, 0) circle (2pt);
	 \filldraw [black] (\x, 1) circle (2pt);
	 }
	\Nodes[0][4];
	\draw (1,1)--(2,1)--(3,1);
	\draw (1,0)--(2,0)--(3,0);
	\draw (4,0)--(4,1) (0,1)--(0,0) (-2,1)--(-2,0) (6,1)--(6,0);
	\node at (-1,.5) {$\dots$};
	\node at (5,.5) {$\dots$};
	\node at (1,1.5) {$i$};
	\node at (6.5,.5) {.};
	\end{tikzpicture} 
$$

In fact, we only need $h_1$ when $k=3$, since we have that $h_1 = t_2 e_1 (s_2 s_3 t_2 s_2 s_3)$ for  $k\geq 4$.

\begin{theorem}\label{thm:gens}
In $P_k(n)$, all diagrams in $\cD$ (i.e. all diagrams without isolated vertices) are generated by 
\begin{equation}\label{eq:Gens}
\cG =  \{ s_1, \dots, s_{k-1}, e_1, b_1, t_1, h_1\}.
\end{equation}
\end{theorem}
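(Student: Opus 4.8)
The plan is to work entirely with diagrams, to use the copy of $\C S_k$ inside $P_k(n)$ to reduce to the case in which every block has even size (already handled by the cited lemmas), and then to realise an arbitrary diagram of $\cD$ by an induction that produces its odd blocks two at a time. To set things up I would first record two standard facts: $s_1,\dots,s_{k-1}$ span $\C S_k\subseteq P_k(n)$, and pre- or post-composing a diagram with a permutation diagram merely relabels its top or bottom row; hence we may conjugate, and more generally pre- and post-multiply by permutations, freely. In particular $e_i$ and $b_i$ for $1\le i\le k-1$ are permutation-conjugates of $e_1$ and $b_1$, so by \cite[Lemma~3.1]{Tanabe} and \cite[Lemma~5.2]{Orellana} the subalgebra generated by $\cG$ already contains every diagram all of whose blocks are even. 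Since the number of odd blocks of a set partition is always even, what remains is to produce every $d\in\cD$ with $2m\ge 2$ odd blocks. I would also note that $t_2$ is a permutation-conjugate of $t_1$ and $h_1=t_2 e_1(s_2 s_3 t_2 s_2 s_3)$, so that for $k\ge 4$ one may drop $h_1$ and work only with $s_i$, $e_i$, $b_i$ and $t_1$.

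For these diagrams I would induct on the pair $\bigl(\#\{\text{odd blocks of }d\},\ \#\{\text{vertices in odd blocks of }d\}\bigr)$, ordered lexicographically. If some odd block $O$ of $d$ has more than three vertices then, being odd, it has a top- or bottom-part of size at least three; after relabelling so that that part occupies consecutive rows, a conjugate of $e_1$ sends $d$ to a diagram $d'\in\cD$ in which $O$ is replaced by a size-two even block together with an odd block two vertices smaller, and a conjugate of $b_1$ recovers $d$ from $d'$. Thus $d$ is a generator-conjugate applied to $d'$, and $d'$ has the same number of odd blocks as $d$ but strictly fewer vertices in odd blocks. So I may assume every odd block of $d$ has exactly three vertices, hence one of the shapes $(|O^t|,|O^b|)\in\{(3,0),(2,1),(1,2),(0,3)\}$. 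The key observation is that $t_1$ is a diagram whose two odd blocks have shapes $(2,1)$ and $(1,2)$, while those of $h_1$ have shapes $(3,0)$ and $(0,3)$. So by pairing up the odd blocks of $d$ and using, for each pair, a permutation-conjugate of $t_1$ or of $h_1$ --- and, when the two shape-types occur in unequal numbers, a short product of two such conjugates glued by an even diagram so that their surplus $3$-blocks fuse into a single even block --- one writes $d$ as a product of permutation-conjugates of elements of $\cG$ with a diagram $d'\in\cD$ having two fewer odd blocks. Either way the lexicographic complexity strictly decreases, and the induction closes.

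I expect the explicit factorisations of the second paragraph to be the main obstacle: for each configuration of the two paired $3$-blocks one has to write down the precise conjugate(s) of $t_1$ and $h_1$ and the precise companion diagram $d'$, check that the product is literally $d$, and verify that $d'$ --- and every intermediate diagram --- lies in $\cD$, i.e. never acquires an isolated vertex. This is a finite but fiddly case analysis, organised by the shapes $(|O^t|,|O^b|)$ of the two blocks and by whether the shapes coincide. The smallest values of $k$ must be inspected by hand, and $k=3$ is genuinely exceptional: there every permutation-conjugate of $t_1$ is a diagram with two propagating odd $3$-blocks whereas $h_1$ has two non-propagating ones, so $h_1$ cannot be built from the other generators and must remain in $\cG$ --- precisely the exception noted just before the theorem.
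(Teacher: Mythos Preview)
Your overall strategy matches the paper's: reduce to the even-block case via the cited lemmas, then handle odd blocks two at a time by induction.  The execution, however, takes an unnecessary detour and misses the paper's key simplification.

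The paper never reduces odd blocks to size~3.  Given any two odd blocks $I$ and $J$, it distinguishes only two cases.  In the generic one (both $I$ and $J$ meet $[k]$, one of them in at least two vertices --- or the mirror statement for $[k']$), the paper picks $j_1,j_2\in J\cap[k]$ and $i\in I\cap[k]$, sets $d'$ equal to $d$ with $I,J$ replaced by the \emph{even} blocks $I\cup\{j_1\}$ and $J\setminus\{j_1\}$, and observes that $d=t_{j_2,j_1,i}\cdot d'$.  In other words a single conjugate of $t_1$ transfers one vertex from $J$ to $I$, making both even at once --- regardless of their sizes or ``shapes''.  The only situation this does not cover is $I\subseteq[k]$ and $J\subseteq[k']$ (or its mirror), which is exactly where $h_1$ is used.

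Your size-3 reduction via $e$ then $b$ is valid but superfluous.  More to the point, your pairing-by-shape step is where the sketch becomes genuinely vague: the ``glued by an even diagram'' manoeuvre for mismatched shape pairs such as $(2,1)+(2,1)$ or $(3,0)+(2,1)$ is not spelled out, and realising each such pair as a product of $t$- or $h$-conjugates is awkward to do directly.  The paper's single vertex-transfer trick dissolves this entire case analysis.  So your plan can be completed, but the cleaner route is to drop the size-3 reduction and use the paper's Case~2 construction (move one top vertex from one odd block to the other) whenever both odd blocks touch the same row; then only the fully-separated case $I\subseteq[k]$, $J\subseteq[k']$ needs $h_1$, and the induction closes in two lines rather than ten shape cases.
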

\begin{proof}
Note that by conjugating $b_1$ or  $e_1$ by words in $\{s_i ~|~ i = 1, \dots, k-1\}$, we can readily generate $e_i$ and $b_i$ for $i = 1, \dots, k-1$, and therefore, by  \cite[Lemma 3.1] {Tanabe}
 and \cite[Lemma 5.2]{Orellana}, all diagrams with even-sized blocks. For any diagram $d$ with an odd-sized block $I$ (and therefore at least one more, $J$), we will build $d$ from a diagram with two fewer odd blocks with the distinguished generators. 
The generators  $t_1, s_1, \dots, s_{k-1}$ generate all set partitions of the form 
$$t_{i_1, i_2, i_3} = \{i_1, i_2, i_1'\}, \{i_3, i_2', i_3'\}, \{1,1'\}, \{2, 2'\}, \dots, \{k, k'\}$$
and the generators $h_1,  s_1, \dots, s_{k-1}$ 
 generate all set partitions of the form 
$$h_{i_1, i_2, i_3} = \{i_1, i_2, i_3\}, \{i_1', i_2', i_3'\}, \{1,1'\}, \{2, 2'\}, \dots, \{k, k'\}.$$

We now prove by induction on the number of pairs of blocks of odd size in a diagram $d$ that we can always write it as a product of elements in $\cG$.  Notice that if the number of pairs of odd blocks is zero, we can write it as a product of the generators.  If the number of pairs in $d$  is greater than zero, then we have two cases to consider. 

\noindent{\sl Case 1:  Let $I$ and $J$ be two blocks of odd size in $d$ and assume $I \subseteq [k]$ and $J \subseteq [k']$}.  

Since $d$ has no singletons, each set has at least 3 elements. So let $i_1, i_2, i_3 \in I$ and $j'_1, j'_2, j'_3 \in J$. Let $d'$ be the diagram with the same blocks as $d$ except that $I$ and $J$ in $d$ have been replaced by the following sets in $d'$:
$$I \setminus \{i_1, i_2, i_3\}, \quad J \setminus \{j'_1, j'_2, j'_3\} ,\quad \{i_1, j'_1\},\quad \{i_2, j'_2\},\quad \{i_3, j'_3\}.$$ 
Then $d'$ has two fewer sets of odd size than $d$ and hence by induction it is the product of elements in $\cG$ . The diagram 
$$h_{i_1, i_2, i_3} d'$$
is the diagram obtained from $d$ by replacing $I$ and $J$ by the sets 
$$I \setminus \{i_1, i_2, i_3\}, \quad J \setminus \{j'_1, j'_2, j'_3\} ,\quad  \{i_1, i_2, i_3\}, \{j'_1, j'_2, j'_3\}.$$ 
If $I$ or $J$ has more than 3 elements (i.e.\ $I \setminus \{i_1, i_2, i_3\}$ or $J \setminus \{j'_1, j'_2, j'_3\}$ are non-empty), then for example 
$$I \setminus \{i_1, i_2, i_3\} \quad \text{ and } \quad  \{i_1, i_2, i_3\}$$
can be joined by right-multiplication of the even-block diagram
$$\{i_1, i_4, i_1', i_4'\}, \{1,1'\}, \dots, \{k,k'\},$$
where $i_4 \in I \setminus \{i_1, i_2, i_3\}$.  Hence, $d$ is the product of elements in $\cG$. 

\medskip

\noindent{\sl Case 2: Both $I$ and  $J$ have at least one element in $[k]$, and one of them has at least two elements in $[k]$.} (Otherwise, the same statement is true for $[k']$, and a similar construction can be used.)

Assume that $J \cap [k]$ has at least two elements $j_1$ and $j_2$. Let $d'$ be a new diagram with the sets $J \setminus j_1$ and $I \cup \{j_1\}$ in place of $I$ and $J$ in $d$. Then $d'$ has two fewer sets of odd size than $d$, and if $i \in I \cap [k]$, then 
$$d = t_{j_2, j_1, i} d'$$
$$\begin{tikzpicture}[scale=.75]
	\node at (-3,-.25) {$d=$};
	 \foreach \x in {-2,0,1,...,4,6} {
	 \filldraw [black] (\x, 0) circle (2pt);
	 \filldraw [black] (\x, 1) circle (2pt);
	 \draw[densely dotted] (\x,0)--(\x,-.5);
	 }
	\draw (1,0)--(1,1)--(2,1)-- (1,0);
	\draw (2,0)--(3,0)--(3,1)--(2,0) ;
	\draw (4,0)--(4,1) (0,1)--(0,0) (-2,1)--(-2,0) (6,1)--(6,0);
	\node at (-1,.5) {$\dots$};
	\node at (5,.5) {$\dots$};
	\node at (1,1.5) {$j_2$};
	\node at (2,1.5) {$j_1$};
	\node at (3,1.5) {$i$};
\pgftransformyshift{-1.5cm};
	 \filldraw [black] (-2, 0) circle (2pt);
	 \filldraw [black] (6, 0) circle (2pt);
	 \foreach \x in {-2,0,1,...,4,6} {
	 \filldraw [black] (\x, 1) circle (2pt);
	 }
	\draw (0,1)--(1,1) (2,1)--(3,1)--(4,1);
	\draw[dashed] (6,0)--(6,1)--(4,1)--(3,.5)--(1,.5)--(0,1)--(-2,1)--(-2,0)--(6,0) ;
	\node at (-1,.5) {$d'$};
	\end{tikzpicture} 
$$
\end{proof}

We have just shown that one can write $d=w$ as diagrams with $w = w_1 w_2 \cdots w_\ell$, $w_i \in \cG$. Further, the algorithm presented inductively on the number of pairs of odd blocks in $d$ provides a process for finding a $w$ satisfying further that for all $i=1, \dots, \ell-1$, the diagram $w_i w_{i+1} \cdots w_\ell$ also has no isolated vertices.

Corollary \ref{cor:barProduct} tells us that if $d_1, d_2 \in \cD$ and the diagram $d_1d_2$ has an isolated vertex, then $\bar d_1 \bar d_2 = 0$. However, if the diagram $d_1d_2$ does not have an isolated vertex, Corollary \ref{cor:barProduct} does not tell us which terms appear with non-zero coefficient. In particular, it is not immediately obvious that $c_{d_1, d_2}^{d_1d_2}$ is non-zero. A case-by-case calculation (done in Appendix \ref{app:TopTerms}) gives the following lemma. 

\begin{lemma} \label{lem:TopTerms}
If $d_1 \in \cG$ is one of the generators of $\cD$ and $d_2\in \cD$, then either 
\begin{enumerate}
\item $d_1 d_2 \in \cD$  and then coefficient $c_{d_1, d_2}^{d_1d_2}$ of $\overline{d_1 d_2}$ in $\bar d_1 \bar d_2$ is non-zero, or 
\item  $d_1 d_2 \notin \cD$, then $\bar d_1 \bar d_2 = 0$. 
\end{enumerate}
\end{lemma}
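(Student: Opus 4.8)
The plan is to reduce the statement to a finite check over the generators $d_1 \in \cG = \{s_1, \dots, s_{k-1}, e_1, b_1, t_1, h_1\}$, using Corollary \ref{cor:barProduct} to handle part (2) for free and Lemma \ref{lem:DominantTerms1}–\ref{lem:DominantTerms2} to control the coefficient in part (1). Part (2) is immediate: Corollary \ref{cor:barProduct} expands $\bar d_1 \bar d_2$ as a sum of $\bar d$ with $d \in \cD$ and $d \leq d_1 d_2$; if $d_1 d_2 \notin \cD$, i.e.\ $d_1 d_2$ has an isolated vertex, then since $d \leq d_1 d_2$ forces every block of $d_1 d_2$ to be a union of blocks of $d$, a singleton block of $d_1 d_2$ is itself a block of $d$, so $d \notin \cD$ and the sum is empty. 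Hence $\bar d_1 \bar d_2 = 0$.

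For part (1), first I would pin down where the coefficient $c_{d_1,d_2}^{d_1d_2}$ comes from concretely. By Lemma \ref{lem:DominantTerms1}, $\bar d_1 = [d_1] + \sum_U a_U[(d_1)_U]$ and $\bar d_2 = [d_2] + \sum_V b_V[(d_2)_V]$, and in the product only $[d_1][d_2] = [d_1 d_2]$ produces the term $[d_1 d_2]$ with its ``native'' contribution $+1$, since every other product $[(d_1)_U][(d_2)_V] = [(d_1)_U(d_2)_V]$ satisfies $(d_1)_U(d_2)_V \leq d_1 d_2$, and when $d_1, d_2$ have no singletons the only way to get $(d_1)_U(d_2)_V = d_1 d_2$ on the nose is typically $U = V = \emptyset$ — but \emph{not always}, which is exactly the subtlety. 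So the real content is: after re-expanding $\bar d_1 \bar d_2 = \sum c_{d_1,d_2}^d [d]$ and then collecting into the $\{\bar d\}$ basis, the coefficient of $\bar{d_1 d_2}$ equals the coefficient of $[d_1 d_2]$ in the bracket expansion (by the argument in the proof of Corollary \ref{cor:barProduct}, since $[d_1 d_2]$ appears with coefficient $1$ in $\bar{d_1 d_2}$ and $0$ in $\bar d'$ for other $d' \in \cD$). Thus I must show this bracket-coefficient is nonzero whenever $d_1 d_2 \in \cD$.

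The strategy for the nonvanishing is to enumerate, for each $d_1 \in \cG$, the ways a viable isolation $(d_1)_U$ of $d_1$ composed with a viable isolation $(d_2)_V$ of $d_2$ can equal $d_1 d_2$ without isolated vertices. Since the generators $s_i$ are invertible diagrams with only blocks of size two, $(s_i)_U = s_i$ forces $U = \emptyset$, so for $d_1 = s_i$ the only contribution is $[s_i][d_2] = [s_i d_2]$ with coefficient exactly $+1$ (all other terms drop because $s_i (d_2)_V = s_i d_2$ has an isolated vertex whenever $V \neq \emptyset$ and $d_2$ has no singleton, as permuting by $s_i$ preserves block sizes). For $d_1 \in \{e_1, b_1, t_1, h_1\}$ the diagrams are small and fixed on the first few strands, so one checks by hand (this is what Appendix \ref{app:TopTerms} does) which isolations can ``conspire'': the key observation is that any cancellation among bracket terms would require two genuinely different pairs $(U,V)$ and $(U',V')$ to yield the same diagram $d_1 d_2 \in \cD$ with opposite-sign coefficients, and since $d_1 d_2$ has \emph{no} isolated vertices one shows an isolation $(d_1)_U(d_2)_V$ equal to it must have $U$ supported only on vertices that get ``absorbed'' into larger blocks during concatenation — then Lemma \ref{lem:DominantTerms2}(2) gives each such term the sign $(-1)^{|U|}$ times a strictly positive power of $1/n$, and the remaining summation over which blocks in $d^t$-type pieces get isolated contributes the factor $\sum_{\cB'} (-1)^{|\cB'|}\prod_{B\in\cB'}|B| = \prod_B(1 - |B|) \neq 0$ since each $|B| \geq 2$. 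Collecting, the total coefficient of $[d_1 d_2]$ is a nonzero rational function of $n$, and in particular nonzero at our integer value.

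The main obstacle is the case analysis in the last step: verifying that for $d_1 \in \{e_1, b_1, t_1, h_1\}$ no unexpected cancellation occurs requires carefully tracking, through concatenation, which bottom vertices of $d_1$ and top vertices of $d_2$ merge, and confirming that whenever $d_1 d_2 \in \cD$ the ``extra'' isolation terms either vanish individually (Lemma \ref{lem:DominantTerms2}(1a)) or regroup into the factor $\prod_B (1-|B|) \neq 0$ rather than summing to zero. I would organize this by the type of the connecting block in $d_1 d_2$ and defer the bookkeeping to Appendix \ref{app:TopTerms}, as the paper does.
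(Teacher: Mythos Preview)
Your overall strategy matches the paper's: reduce to the bracket coefficient of $[d_1d_2]$ via Corollary~\ref{cor:barProduct} and Lemma~\ref{lem:DominantTerms1}, dispose of $s_i$ using $\bar s_i=[s_i]$, and then run a finite case check over $d_1\in\{e_1,b_1,t_1,h_1\}$ organized by how the first few top vertices of $d_2$ sit in blocks. Part~(2) and the reduction are fine.

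The gap is in your attempted shortcut for part~(1). You claim the surviving contributions regroup into a factor $\sum_{\cB'}(-1)^{|\cB'|}\prod_{B\in\cB'}|B|=\prod_B(1-|B|)\neq 0$ by invoking Lemma~\ref{lem:DominantTerms2}(1b). That lemma governs the expansion of a \emph{single} $\bar d$ into brackets; it does not describe the coefficient of $[d_1d_2]$ in the \emph{product} $\bar d_1\bar d_2$. When you multiply $[(d_1)_U][(d_2)_V]$ you are working in $P_k(n-1)$, and the concatenation produces extra factors of $(n-1)^c$ from closed middle components. These appear throughout the paper's Appendix~\ref{app:TopTerms} computations (e.g.\ the $e_1$ case with $\{1,2\}$ a block of $d$ gives coefficient $n-1$; the generic $h_1$ cases give $(n-1)(n-2)/n$ and $(n-1)(n-2)/n^2$), and they do not fit your proposed product formula. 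Your sign-and-power-of-$1/n$ bookkeeping also ignores that several distinct pairs $(U,V)$ with different $(n-1)$-weights can land on the same diagram $d_1d_2$.

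The paper does not attempt a uniform formula: it simply lists, for each generator, the (few) pairs of isolations that could hit $[d_1d_2]$, multiplies them out in $P_k(n-1)$ keeping track of the $(n-1)^c$ factors, and records the resulting rational function of $n$ in each subcase, checking it is nonzero. Your proposal should drop the $\prod_B(1-|B|)$ heuristic and do the same; the case analysis is unavoidable here.
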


\begin{corollary}
As before, let $\bar d = \pi^{\otimes k} \circ d$. Then $QP_k(n)$ is generated by 
$\{ \bar d ~|~ d \in \cG \}$, where $\cG$ is as in \eqref{eq:Gens}.
\end{corollary}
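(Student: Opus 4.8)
The plan is as follows. Since $\{\bar d \mid d \in \cD\}$ is a basis of $QP_k(n)$ (established above), it suffices to prove that every $\bar d$ with $d \in \cD$ lies in the subalgebra $A := \langle\, \bar g \mid g \in \cG\,\rangle$. Fix $d \in \cD$. By Theorem \ref{thm:gens} and the remark following its proof, I may write $d = w_1 w_2 \cdots w_\ell$ as a product of diagrams in $P_k(n)$ with each $w_i \in \cG$ and with every tail $d^{(i)} := w_i w_{i+1}\cdots w_\ell$ again in $\cD$; then $\bar{w_1}\bar{w_2}\cdots\bar{w_\ell} \in A$. Applying Corollary \ref{cor:barProduct} repeatedly, together with the monotonicity of diagram multiplication under the refinement order already used in the proof of that corollary (if $d_1' \leq d_1$ and $d_2' \leq d_2$ then $d_1'd_2' \leq d_1 d_2$), every term of $\bar{w_1}\cdots\bar{w_\ell}$ is $\bar{d'}$ with $d' \in \cD$ and $d' \leq w_1\cdots w_\ell = d$. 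Since $d' \leq d$ forces $|d'| \geq |d|$ with equality only if $d' = d$, this yields an expansion
\[
\bar{w_1}\bar{w_2}\cdots\bar{w_\ell} \;=\; a_d\,\bar d \;+\; \sum_{\substack{d' \in \cD\\ d' \lneq d}} a_{d'}\,\bar{d'} .
\]

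Next I would induct downward on the number of blocks $|d|$. In the base case $|d| = k$ the diagram $d$ has no proper refinement in $\cD$, so the above reads $\bar{w_1}\cdots\bar{w_\ell} = a_d\,\bar d$. In the inductive step, every $d' \lneq d$ is a proper refinement and hence has $|d'| > |d|$, so $\bar{d'} \in A$ by the induction hypothesis; consequently $a_d\,\bar d = \bar{w_1}\cdots\bar{w_\ell} - \sum_{d' \lneq d} a_{d'}\bar{d'} \in A$, and as soon as $a_d \neq 0$ we conclude $\bar d \in A$ (nonzero scalars in $\mathbb{C}$ being invertible). Note that the intermediate block counts of the tails $d^{(i)}$ play no role here: only refinements of $d$ itself appear in the final expansion.

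The crux, and the step I expect to be the main obstacle, is showing $a_d \neq 0$. I would do this by a secondary induction on the word length $\ell$, peeling $\bar{w_1}$ off from the left. Writing $\bar{w_2}\cdots\bar{w_\ell} = \mu\,\overline{d^{(2)}} + \sum_{e' \lneq d^{(2)}}\mu_{e'}\bar{e'}$ with $\mu \neq 0$ by the inductive hypothesis, one has $\bar{w_1}\cdots\bar{w_\ell} = \mu\,\bar{w_1}\overline{d^{(2)}} + \sum_{e'}\mu_{e'}\,\bar{w_1}\bar{e'}$; since $w_1 \in \cG$, $d^{(2)} \in \cD$ and $w_1 d^{(2)} = d \in \cD$, Lemma \ref{lem:TopTerms} says $\bar d$ occurs in $\bar{w_1}\overline{d^{(2)}}$ with a nonzero coefficient $c^{d}_{w_1,d^{(2)}}$. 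In any $\bar{w_1}\bar{e'}$ every term is $\leq w_1 e' \leq d$, so $\bar d$ can occur there only when $w_1 e' = d$, and then once more with nonzero coefficient by Lemma \ref{lem:TopTerms}; hence $a_d = \mu\,c^{d}_{w_1,d^{(2)}} + \sum_{e'\,:\,w_1 e' = d}\mu_{e'}\,c^{d}_{w_1,e'}$, a sum of products of nonzero scalars. What must be checked is that these ``healing'' contributions cannot conspire to cancel the main term $\mu\,c^{d}_{w_1,d^{(2)}}$. I expect this to follow from the explicit form of the coefficients in Lemma \ref{lem:DominantTerms2} — contributions coming from isolated vertices in $[k]$ carry strictly higher inverse powers of $n$, so $a_d$ is a nonzero rational function of $n$ whose behaviour as $n \to \infty$ is controlled by the top term $c^{d}_{w_1,d^{(2)}}$ — and, should a uniform argument for every integer $n \geq 2k+1$ prove awkward, one may instead establish the identity in the generic algebra $QP_k(x)$ over $\mathbb{C}(x)$ and specialize $x \mapsto n$. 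An alternative organization is to process the word one generator at a time from the right, using Lemma \ref{lem:TopTerms} to solve recursively for $\overline{d^{(i)}}$ in terms of $\overline{d^{(i+1)}}$ and strictly finer diagrams, though that version does require carrying the block counts of the successive tails through the induction.
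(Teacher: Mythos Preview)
Your approach is essentially the paper's: write $d=w_1\cdots w_\ell$ with all tails in $\cD$, use Lemma~\ref{lem:TopTerms} to show $\bar d$ appears with nonzero coefficient in $\bar w_1\cdots\bar w_\ell$, and then invert via the triangularity of Corollary~\ref{cor:barProduct}. The paper's proof is in fact terser than yours---it simply asserts that ``using Lemma~\ref{lem:TopTerms} inductively, $\bar w$ appears with non-zero coefficient'' and does not explicitly confront the ``healing'' contributions you isolate (proper refinements $e'\lneq d^{(i+1)}$ with $w_ie'=d^{(i)}$); so the potential cancellation you flag is not addressed more fully there than in your outline, and your proposed remedy of passing to $QP_k(x)$ (or tracking the leading power of $n$ via Lemma~\ref{lem:DominantTerms2}) is a reasonable way to close it.
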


\begin{proof}
Let $d \in \cD$. By Theorem \ref{thm:gens}, we can generate $d$ as a diagram from $\cG$; let $w = w_1 w_2 \cdots w_\ell$ be a word in the elements of $\cG$ so that $d = w$ as diagrams, satisfying for all $i=1, \dots, \ell-1$, the diagram $w_i w_{i+1} \cdots w_\ell \in \cD$ (as done immediately after the proof of Theorem \ref{thm:gens}). Using Lemma \ref{lem:TopTerms} inductively, $\bar w$ appears with non-zero coefficient in $\bar w_1 \cdots \bar w_\ell$. Using the triangularity of Corollary \ref{cor:barProduct}, we can then generate all of $\{\bar d ~|~ d \in \cD\}$. 
\end{proof}

We can use Lemmas \ref{lem:DominantTerms1} and \ref{lem:DominantTerms2} to calculate the expansion of these elements of $QP_k(n)$ in terms of maps $[d]$, and thus use relations in $P_k(n-1)$ to determine relations in $QP_k(n)$. One can easily check that the generators satisfy  the following relations in $QP_k(n)$: 
$$
\bar{s}_i^2 =1, \quad \bar{s}_{i}\bar{s}_{i+1}\bar{s}_{i}=\bar{s}_{i+1}\bar{s}_{i}\bar{s}_{i+1}, \quad \bar{s}_{i}\bar{s}_{j}=\bar{s}_{j}\bar{s}_{i} \quad \text{ if } |i-j|>1,$$
$$\bar{e}_i^2 = (n-1)\bar{e}_i, \quad \bar{e}_i\bar{e}_{i\pm1}\bar{e}_i = \bar{e}_i , \qquad \bar{b}_i^2 = \frac{n-2}{n}\bar{b}_i + \frac{1}{n^2}\bar{e}_i$$
$$\bar{s}_i\bar{b}_i=\bar{b}_i\bar{s}_i=\bar{b}_i \quad \text{ if } 1\leq i \leq n-2, \qquad \bar{s}_i\bar{t}_i = \bar{t}_i\bar{s}_{i+1}= \bar{t}_i \quad \text{ if } 1\leq i \leq n-2, \quad \text{ and }$$
$$\bar{e}_i \bar{t}_i = \bar{t}_i \bar{e}_{i+1} =0.$$

Comparing these relations to  those of the partition algebra $P_k(n-1)$ 
$${s}_i^2 =1, \quad {s}_{i}{s}_{i+1}{s}_{i}={s}_{i+1}{s}_{i}{s}_{i+1}, \quad {s}_{i}{s}_{j}={s}_{j}{s}_{i} \quad \text{ if } |i-j|>1,$$
$${e}_i^2 = (n-1){e}_i, \quad {e}_i{e}_{i\pm1}{e}_i = {e}_i , \qquad {b}_i^2 = {b}_i $$
$${s}_i{b}_i={b}_i{s}_i={b}_i \quad \text{ if } 1\leq i \leq n-2, \qquad {s}_i{t}_i = {t}_i{s}_{i+1}= {t}_i \quad \text{ if } 1\leq i \leq n-2, \quad \text{ and }$$
$${e}_i {t}_i = {t}_i {e}_{i+1} =0,$$
we observe that they are very similar but with additional lower terms in some cases.  For example, ${b}_i^2$ has additional terms, however, when $n\rightarrow \infty$, we have  $\bar{b}_i^2\rightarrow \bar{b}_i$.


\section{Representation Theory of the Quasi-partition algebra}

In this section we describe the representation theory of $QP_k(n)$.  To do this we need to introduce partitions as the representation of both the symmetric group and $QP_k(n)$ is based on these combinatorial objects. 

Fix $n \in \Z_{\geq 0}$. A \emph{partition} $\lambda$ of $n$, denoted $\lambda\vdash n$, is a sequence of nonnegative integers 
$\lambda = (\lambda_1, \lambda_2, \ldots, \lambda_{\ell})$ such that $|\lambda|=\lambda_1+\lambda_2 + \cdots + \lambda_\ell =n$ and $\lambda_1\geq \lambda_2\geq \cdots \geq \lambda_\ell$.  If $n=0$, there is one partition, the empty partition, denoted by $\emptyset$.   
Given two partitions $\lambda$ and $\mu$ we say that $\mu\subseteq \lambda$ if $\mu_i\leq \lambda_i$ for all $i$.  

The irreducible representations $S^\lambda$ of $S_n$ are indexed by partitions $\lambda$ of $n$. As usual we identify a partition with its Young diagram, depicted as $|\alpha|$ boxes up and left justified, where the $i$th row has $\alpha_i$ boxes. In our setting, the combinatorics of the representation theory  of $QP_k(n)$ can be simplified by replacing partitions $\lambda \vdash n$ with partitions $(\lambda_2, \dots, \lambda_\ell)$ of $\lambda_2+\cdots+\lambda_\ell$. Thus the partitions of $n$ are in bijection with the partitions $\alpha$ of $m < n$ for which $\alpha_1 \leq n/2$.  For any such partition, let 
$$\VCenter{1}{$\bar{\alpha}= (n-|\alpha|, \alpha_1, \dots, \alpha_\ell). \qquad  
 \text{ For example, }$} 
\VCenter{1}{$(n-7, 3, 3, 1) = \overline{(3,3,1)}=$} \overline{\yng(3,3,1)} \VCenter{.9}{$.$}$$

In what follows we will need the following theorem. 
\begin{theorem}[The Centralizer Theorem]
Let $A$ be a finite dimensional algebra over $\mathbb{C}$. Let $M$ be a semi simple $A$-module and let $C=\End_A(M)$.  Suppose that $M\cong \bigoplus_{\lambda} (A^\lambda)^{\oplus m_\lambda}$, where $A^\lambda$ are irreducible $A$-modules and $m_\lambda\in \mathbb{Z}_{\geq 0}$ are multiplicities of $A^\lambda$ in the decomposition of $M$.   Then
\begin{enumerate}
\item[(a)] $C\cong \bigoplus_{\lambda} M_{m_\lambda}(\mathbb{C})$
\item[(b)] As an $(A, C)$-bimodule:  $M\cong \bigoplus_{\lambda} A^\lambda\otimes C^\lambda$, where $C^\lambda$ are simple $C$-modules. 
\end{enumerate}
\end{theorem}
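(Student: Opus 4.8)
The plan is to derive both parts from Schur's Lemma, using that $\C$ is algebraically closed, so that $\operatorname{Hom}_A(A^\lambda, A^\mu) \cong \C$ when $A^\lambda \cong A^\mu$ and vanishes otherwise. The whole argument is the standard double-centralizer bookkeeping; the given isotypic decomposition $M \cong \bigoplus_\lambda (A^\lambda)^{\oplus m_\lambda}$ does all the real work.

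First I would record the isotypic decomposition in the tensor form $M \cong \bigoplus_\lambda A^\lambda \otimes_\C C^\lambda$, where $C^\lambda := \C^{m_\lambda}$ is the multiplicity space and $A$ acts only on the left tensor factor. Then I decompose $C = \End_A(M) = \bigoplus_{\lambda,\mu} \operatorname{Hom}_A\big(A^\lambda \otimes C^\lambda,\, A^\mu \otimes C^\mu\big)$ and apply Schur's Lemma block by block: the $\lambda \neq \mu$ terms vanish, and the $\lambda = \mu$ block is identified with $\operatorname{Hom}_A(A^\lambda, A^\lambda) \otimes \operatorname{Hom}_\C(C^\lambda, C^\lambda) \cong \C \otimes M_{m_\lambda}(\C)$. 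Assembling the blocks yields the algebra isomorphism $C \cong \bigoplus_\lambda M_{m_\lambda}(\C)$ of part (a). Here I would be mildly careful about the convention for composing endomorphisms: depending on whether $C$ is taken to act on $M$ on the left or the right, one may a priori land in $\bigoplus_\lambda M_{m_\lambda}(\C)^{\mathrm{op}}$, but since $M_m(\C)^{\mathrm{op}} \cong M_m(\C)$ via transpose this is harmless.

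For part (b), under the identification from (a) the algebra $C$ acts on $M \cong \bigoplus_\lambda A^\lambda \otimes C^\lambda$ block-diagonally: the factor $M_{m_\mu}(\C)$ acts on $A^\mu \otimes C^\mu$ through its tautological action on $C^\mu = \C^{m_\mu}$ and by $0$ on the other summands. This commutes with the $A$-action by construction, so $M$ is an $(A,C)$-bimodule and the displayed decomposition is one of $(A,C)$-bimodules. Finally, each $C^\mu = \C^{m_\mu}$ is exactly the simple module of the matrix factor $M_{m_\mu}(\C)$; as $C$ is a direct sum of matrix algebras, the $C^\mu$ (over $\mu$ with $m_\mu > 0$) form a complete, non-redundant list of simple $C$-modules, which finishes (b). I expect no genuine obstacle — the only points demanding attention are the side/bimodule conventions just mentioned and the invocation of Schur's Lemma to get $\operatorname{Hom}_A(A^\lambda, A^\lambda) = \C$.
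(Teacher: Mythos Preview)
Your argument is correct and is the standard proof of the double centralizer theorem via Schur's Lemma. Note, however, that the paper does not actually prove this statement: it is quoted as a classical result and used as a black box, so there is no ``paper's own proof'' to compare against.
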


By the  centralizer theorem, in order to understand the representation theory of $QP_k(n)$ we need to know how to decompose the $S_n$ module $W^{\otimes k}$.   The tensor product (or \emph{Kronecker product}) of two irreducible representations is usually not itself irreducible,  and a general rule for decomposing this tensor product is not known.   However, there are many results concerning stability of the product. For example, in \cite{BOR}, it is shown that if $n\geq |\alpha|+|\beta|+\alpha_1+\beta_1$ then the Kronecker product 
 \[S^{\bar\alpha}\otimes S^{\bar\beta} =\bigoplus_{\gamma}{g}_{\alpha\beta}^\gamma S^{\bar\gamma}\]
 is stable, meaning that for large $n$ the product does not depend on the first row of the partitions or equivalently it does not depend on $n$. 
In the special case when one of the representations is $S^{(n-1,1)} = S^{\overline{(1)}} = S^{\, \tiny\overline{\yng(1)}}$, we have the very well-known result 
\begin{equation}\label{one-box-decomp} S^{\overline{\alpha}} \otimes S^{\, \tiny\overline{\yng(1)}} = c(\alpha) (S^{\overline{\alpha}}) \oplus \bigoplus_{\beta \in \alpha^{\pm}}  S^{\overline{\beta}},\end{equation}
where $c(\alpha)$ is the number of corner boxes of $\alpha$ and $\alpha^{\pm}$ is the set of partitions $\beta$ with $\beta_1 \leq n/2$ gotten from $\alpha$ by (1) adding a box, (2) removing a box, or (3) moving a corner box of alpha to another corner. 
That is, $\beta$ differs from $\alpha$ from the position of a corner.   For example, if $n$ is large enough,
$$
S^{\tiny \overline{\yng(2,1)}} \otimes S^{\tiny \overline{\yng(1)}}  =  2 \  S^{\tiny \overline{\yng(2,1)}} \oplus S^{\tiny \overline{\yng(2)}} \oplus S^{\tiny \overline{\yng(1,1)}} \oplus S^{\tiny \overline{\yng(3)}} \oplus S^{\tiny \overline{\yng(1,1,1)}} \oplus    S^{\tiny \overline{\yng(3,1)}} \oplus S^{\tiny \overline{\yng(2,2)}} \oplus S^{\tiny \overline{\yng(2,1,1)}}.
$$

 If $n\gg 0$, then we get a stable product and we can ignore the first row of the partitions.

 \begin{theorem}  Let $k\geq 2$ and assume that $n\geq 2k$.  If $L_k(\lambda)$ denote the irreducible representations of $QP_k(n)$,  we have the decomposition of $W^{\otimes k}$ as an $(S_n,QP_k(n))$-bimodule
 \[W^{\otimes k} = \bigoplus S^{\overline{\lambda}} \otimes L_k(\lambda)\]
 where the sum is over all partitions $\overline{\lambda}$ of $n$ such that $|\lambda|\leq k$. 
 
 \end{theorem}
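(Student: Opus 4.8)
The plan is to combine the Centralizer Theorem with the iterated one-box Kronecker rule \eqref{one-box-decomp}. Since $\C S_n$ is semisimple, $W^{\otimes k}$ is a semisimple $S_n$-module, so $W^{\otimes k} \cong \bigoplus_{\lambda} (S^{\overline{\lambda}})^{\oplus m_{k,\lambda}}$ for uniquely determined multiplicities $m_{k,\lambda} \ge 0$; write $\Lambda_k = \{\lambda : m_{k,\lambda} > 0\}$. The entire content of the theorem is the claim that $\Lambda_k = \{\lambda : |\lambda| \le k\}$ whenever $k \ge 2$ and $n \ge 2k$. Granting this, we apply the Centralizer Theorem with $A = \C S_n$, $M = W^{\otimes k}$, and $C = \End_{S_n}(W^{\otimes k}) = QP_k(n)$ (the last equality being the definition of $QP_k(n)$): it yields that the irreducible $C$-modules are the $L_k(\lambda)$ indexed by $\lambda \in \Lambda_k$, with $\dim L_k(\lambda) = m_{k,\lambda}$, and that $W^{\otimes k} \cong \bigoplus_{\lambda \in \Lambda_k} S^{\overline{\lambda}} \otimes L_k(\lambda)$ as an $(S_n, QP_k(n))$-bimodule, which is precisely the assertion.

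So the remaining work is to identify $\Lambda_k$ by induction on $k$. The base cases are immediate: $W^{\otimes 1} = W = S^{\overline{(1)}}$ gives $\Lambda_1 = \{(1)\}$, and $W^{\otimes 2} = W \otimes S^{\overline{(1)}}$ decomposes by \eqref{one-box-decomp} (using $c((1)) = 1$ and $(1)^{\pm} = \{\emptyset, (2), (1,1)\}$) as $S^{\overline{(1)}} \oplus S^{\overline{\emptyset}} \oplus S^{\overline{(2)}} \oplus S^{\overline{(1,1)}}$, so $\Lambda_2 = \{\lambda : |\lambda| \le 2\}$.

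For the inductive step $k \ge 3$, we write $W^{\otimes k} = W^{\otimes(k-1)} \otimes W$ and decompose each summand $S^{\overline{\mu}}$ of $W^{\otimes(k-1)}$ (so $\mu \in \Lambda_{k-1}$, hence $|\mu| \le k-1$ by induction) via \eqref{one-box-decomp}. The hypothesis $n \ge 2k$ is exactly what is needed here: it gives $\mu_1 \le k-1 \le n/2 - 1$ and $|\mu| + 1 + \mu_1 + 1 \le 2k \le n$, so the rule applies and every partition it produces satisfies the constraint on first rows. Each $\nu$ occurring in $S^{\overline{\mu}} \otimes S^{\overline{(1)}}$ equals $\mu$ (when $c(\mu) \ge 1$) or lies in $\mu^{\pm}$, hence $|\nu| \le |\mu| + 1 \le k$; this proves $\Lambda_k \subseteq \{\lambda : |\lambda| \le k\}$. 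For the reverse inclusion, take $\lambda$ with $|\lambda| \le k$: if $1 \le |\lambda| \le k-1$, then $\lambda \in \Lambda_{k-1}$ by induction and, since $c(\lambda) \ge 1$, $S^{\overline{\lambda}}$ reappears in $S^{\overline{\lambda}} \otimes S^{\overline{(1)}}$, so $\lambda \in \Lambda_k$; if $|\lambda| = k$, delete a corner box of $\lambda$ to get $\mu$ with $|\mu| = k-1$, so $\mu \in \Lambda_{k-1}$ and $\lambda \in \mu^{\pm}$ (adding that box back), whence $\lambda \in \Lambda_k$; and $\emptyset \in \Lambda_k$ because $(1) \in \Lambda_{k-1}$ and $\emptyset \in (1)^{\pm}$ (removing the box). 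This closes the induction.

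The argument is essentially bookkeeping of the support of an iterated Kronecker product against \eqref{one-box-decomp}, so we do not anticipate a genuine obstacle; the delicate points are the edge cases. The empty partition is the one to watch: because $c(\emptyset) = 0$, it enters the support only through a box-removal, which is why $\Lambda_1 = \{(1)\}$ while $\Lambda_k = \{\lambda : |\lambda| \le k\}$ only for $k \ge 2$, matching the theorem's hypothesis. The bound $n \ge 2k$ must be invoked precisely where stated, both to keep $\lambda_1 \le n/2$ (so that $\overline{\lambda}$ is a genuine partition of $n$) for every $\lambda$ that can appear, and to keep every use of \eqref{one-box-decomp} in its stable regime, so that, incidentally, the multiplicities $m_{k,\lambda}$ and the dimensions $\dim L_k(\lambda)$ are independent of $n$.
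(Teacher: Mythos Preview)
Your proof is correct and follows the same approach as the paper: apply the Centralizer Theorem together with the one-box Kronecker rule \eqref{one-box-decomp} to decompose $W^{\otimes k}$. The paper's own proof is very terse and merely cites these two ingredients, whereas you have filled in the inductive verification that the index set is exactly $\{\lambda : |\lambda|\le k\}$ and checked that the hypothesis $n\ge 2k$ keeps every step in the stable range.
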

 \begin{proof}
Since $n\geq 2k$, we have that $QP_k(n)$ is isomorphic to the centralizer algebra.  Then by the rule for tensoring $S^{(n-1,1)}\otimes S^\lambda$ and the  centralizer theorem we get the decomposition. 
 \end{proof}
 
 We get as an immediate consequence a labeling set for the irreducible representations of $QP_k(n)$. 
 
 \begin{corollary} For $n/2 \geq k \geq 2$ the irreducible representations of $QP_k(n)$ are indexed by partitions $\mu$ of $0, 1, \ldots, k$.
\end{corollary}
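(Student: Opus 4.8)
The plan is to derive this immediately from the preceding theorem together with the Centralizer Theorem. By the theorem just proved, for $n/2 \geq k \geq 2$ we have the bimodule decomposition
\[
W^{\otimes k} = \bigoplus_{\substack{\overline{\lambda} \vdash n \\ |\lambda| \leq k}} S^{\overline{\lambda}} \otimes L_k(\lambda),
\]
and since $n \geq 2k$, $QP_k(n)$ is isomorphic to the centralizer algebra $\End_{S_n}(W^{\otimes k})$. The Centralizer Theorem then tells us that $QP_k(n) \cong \bigoplus_{\lambda} M_{m_\lambda}(\mathbb{C})$, where the sum runs over exactly those $\overline{\lambda}$ appearing with nonzero multiplicity $m_\lambda$ in $W^{\otimes k}$, and the $L_k(\lambda) = C^\lambda$ are precisely the simple modules. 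So the irreducible representations of $QP_k(n)$ are indexed by the set of partitions $\lambda$ (equivalently $\overline{\lambda}$) that actually occur in $W^{\otimes k}$.

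The first step is therefore to show that the indexing set is exactly $\{\lambda : |\lambda| \leq k\}$, i.e. that every partition $\lambda$ with $|\lambda| \leq k$ occurs with positive multiplicity $m_\lambda > 0$ in $W^{\otimes k}$. The upper bound $|\lambda| \leq k$ is clear from iterating the one-box decomposition rule \eqref{one-box-decomp}: starting from $W^{\otimes 0} = S^{(n)} = S^{\overline{\emptyset}}$ and tensoring by $S^{\overline{\yng(1)}}$ a total of $k$ times, each factor changes $|\alpha|$ by at most $1$, so after $k$ steps only $\overline{\lambda}$ with $|\lambda| \leq k$ can appear. For the reverse inclusion, I would argue by induction on $k$: given any $\lambda$ with $|\lambda| \leq k$, I must exhibit it in $W^{\otimes k}$. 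Using \eqref{one-box-decomp}, $S^{\overline{\lambda}}$ appears in $S^{\overline{\mu}} \otimes S^{\overline{\yng(1)}}$ whenever $\mu \in \lambda^{\pm}$ or $\mu = \lambda$; in particular, if $|\lambda| \geq 1$, pick $\mu$ obtained from $\lambda$ by removing a corner box, so $|\mu| = |\lambda| - 1 \leq k-1$, and by the inductive hypothesis $S^{\overline{\mu}}$ occurs in $W^{\otimes(k-1)}$, hence $S^{\overline{\lambda}}$ occurs in $W^{\otimes k}$. If $|\lambda| < k$, one instead uses the term $c(\lambda) S^{\overline{\lambda}}$ on the right of \eqref{one-box-decomp} to propagate $\lambda$ forward from $W^{\otimes |\lambda|}$ (or any smaller valid tensor power) to $W^{\otimes k}$, noting $c(\lambda) \geq 1$ for $\lambda \neq \emptyset$, and that $\emptyset$ itself occurs in every even tensor power (and $k \geq 2$) — more simply, $S^{\overline{\lambda}} \otimes S^{\overline{\yng(1)}}$ always contains $S^{\overline{\lambda}}$ since $c(\lambda) \geq 1$ for nonempty $\lambda$, while for $\lambda = \emptyset$ we have $S^{\overline{\emptyset}} \otimes S^{\overline{\yng(1)}}{}^{\otimes 2} \supseteq S^{\overline{\emptyset}}$.

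The second step is purely bookkeeping: combining the positivity of all relevant $m_\lambda$ with part (a) of the Centralizer Theorem gives that the simple $QP_k(n)$-modules are indexed by $\{\lambda : |\lambda| \leq k\}$, which is the same as the set of partitions of $0, 1, \dots, k$, and part (b) identifies these simples with the $L_k(\lambda)$. I do not anticipate a genuine obstacle here; the only point requiring a small amount of care is making the base case and the $\emptyset$ case of the induction airtight (ensuring $\emptyset$, and every $\lambda$ with $|\lambda| < k$, survives all the way out to $W^{\otimes k}$ rather than only appearing in some intermediate tensor power), together with checking that the hypothesis $n/2 \geq k \geq 2$ is exactly what is needed to invoke both the bimodule theorem and the stable form of \eqref{one-box-decomp} without the corner-moving operations in $\alpha^{\pm}$ pushing $\beta_1$ above $n/2$.
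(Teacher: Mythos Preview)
Your approach is correct and is essentially the same as the paper's: the corollary is stated there as an immediate consequence of the preceding bimodule decomposition together with the Centralizer Theorem, with no further argument given. Your proposal goes beyond the paper by explicitly verifying, via induction on $k$ using the one-box rule \eqref{one-box-decomp}, that every partition $\lambda$ with $|\lambda|\le k$ actually occurs with positive multiplicity in $W^{\otimes k}$; the paper leaves this implicit in the statement of the theorem and its two-line proof.
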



\subsection{Bratteli Diagram} We set $QP_0(n) =\mathbb{C}$.  And for any other $k\geq 1$ we have that 
\[QP_{k-1}(n) \subseteq QP_k(n)\]
We identify the elements in $QP_{k-1}(n)$ with the elements of $QP_{k}(n)$ that contain the block $\{ k, k'\}$. 
Hence, we have a tower of algebras
\begin{equation}\label{QPchain}
QP_0(n) \subset QP_1(n) \subset QP_2(n) \subset QP_3(n) \subset \cdots 
\end{equation}
For the remaining of the paper we assume that $n\gg 0$. 

Recall that we can represent the inclusion of $A\subset B$ of multimatrix algebras (with the same unit) by a bipartite graph.  The vertices in the graph are labeled by the simple summands of $A$ and $B$.  The number of edges joining a vertex $v$ for $A$ to a vertex $w$ for $B$ is the number of times the representation $v$ occurs in the restriction $w$ to $A$. 

In the case that we have a sequence of inclusions $A_0\subset A_1 \subset A_2 \subset \cdots$ of multimatrix algebras, one may connect the bipartite graphs describing the inclusions $A_i\subset A_{i+1}$, to obtain the \emph{Bratteli diagram}. 

We build the graph $\hat{P}$ as follows:

\begin{itemize} 

\item  vertices on level $k=0$: $\cP_0=\{\emptyset\}$, on level $k=1$, $\cP_1=\{(1)\}$ and for 
$k\geq 2$,  $\cP_k =\{\mu\, |\,  |\mu|\leq k\}$.
\item  edges $\lambda\rightarrow \mu$ for $\lambda \in \cP_{k-1}$, $\mu\in \cP_k$: 
\begin{itemize}
\item $\lambda$ is connected to $\mu$ by $c(\lambda)$ edges; 
\item $\lambda$ is connected to $\mu$ by one edge if $\lambda$ differs from $\mu$ by the position of one corner or by the removal or addition of one corner. 
\end{itemize}
\end{itemize}

\begin{proposition}
If we assume that $n\rightarrow \infty$.   The Bratteli diagram of the chain
\[QP_0(n) \subset QP_1(n) \subset QP_2(n) \subset QP_3(n)\subset \cdots \]
is the graph $\hat{P}$. 

\end{proposition}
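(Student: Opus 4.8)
The plan is to identify the edges of the Bratteli diagram between levels $k-1$ and $k$ as restriction multiplicities and compute these from the double-centralizer picture, by writing down the $(S_n,QP_{k-1}(n))$-bimodule $W^{\otimes k}$ in two ways and matching the answers. Throughout I take $n$ large relative to the level under consideration, so that $QP_j(n)=\End_{S_n}(W^{\otimes j})$, the modules $S^{\bar\mu}$ with $\mu\in\cP_j$ are pairwise non-isomorphic irreducible $S_n$-modules, and the one-box rule \eqref{one-box-decomp} holds exactly; letting $n\to\infty$ packages these truncations of the diagram into the single stable graph $\hat P$. Since $QP_j(n)$ is the centralizer of the semisimple algebra $\C S_n$ it is itself semisimple, $W^{\otimes j}$ is a semisimple $QP_j(n)$-module, and by the Centralizer Theorem
\[
W^{\otimes j}\ \cong\ \bigoplus_{\mu\in\cP_j} S^{\bar\mu}\otimes L_j(\mu)
\]
as an $(S_n,QP_j(n))$-bimodule for every $j\ge 0$; for $j=0,1$ this reads $W^{\otimes 0}=S^{\bar\emptyset}\otimes L_0(\emptyset)$ and $W^{\otimes 1}=S^{\overline{(1)}}\otimes L_1((1))$, with $QP_0(n)=QP_1(n)=\C$.

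First I would use the identification in \eqref{QPchain} of $QP_{k-1}(n)$ with the subalgebra of $QP_k(n)$ of elements containing the block $\{k,k'\}$: if $d$ contains $\{k,k'\}$ then $\bar d$ acts on $W^{\otimes k}=W^{\otimes(k-1)}\otimes W$ as $\overline{d'}\otimes\id_W$, so as a $QP_{k-1}(n)$-module $W^{\otimes k}$ is $W^{\otimes(k-1)}\otimes W$ with $QP_{k-1}(n)$ acting only through the first $k-1$ factors. On one side, restricting the $QP_k(n)$-action in $W^{\otimes k}\cong\bigoplus_{\lambda\in\cP_k}S^{\bar\lambda}\otimes L_k(\lambda)$ gives
\[
W^{\otimes k}\ \cong\ \bigoplus_{\lambda\in\cP_k}\ \bigoplus_{\mu\in\cP_{k-1}} m_{\lambda\mu}\,S^{\bar\lambda}\otimes L_{k-1}(\mu),
\]
where $m_{\lambda\mu}$ is the multiplicity of $L_{k-1}(\mu)$ in $\operatorname{Res}^{QP_k(n)}_{QP_{k-1}(n)}L_k(\lambda)$, i.e.\ the number of edges from $\mu$ on level $k-1$ to $\lambda$ on level $k$. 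On the other side, decomposing the first $k-1$ factors and then tensoring with $W=S^{\overline{(1)}}$,
\[
W^{\otimes k}\ \cong\ \Big(\bigoplus_{\mu\in\cP_{k-1}}S^{\bar\mu}\otimes L_{k-1}(\mu)\Big)\otimes W\ \cong\ \bigoplus_{\mu\in\cP_{k-1}}\big(S^{\bar\mu}\otimes S^{\overline{(1)}}\big)\otimes L_{k-1}(\mu),
\]
with $S_n$ acting diagonally on $S^{\bar\mu}\otimes S^{\overline{(1)}}$ and $QP_{k-1}(n)$ on $L_{k-1}(\mu)$. Now \eqref{one-box-decomp} gives $S^{\bar\mu}\otimes S^{\overline{(1)}}\cong c(\mu)\,S^{\bar\mu}\oplus\bigoplus_{\beta\in\mu^{\pm}}S^{\bar\beta}$, so the coefficient of $S^{\bar\lambda}\otimes L_{k-1}(\mu)$ in this second expression is $c(\mu)$ if $\lambda=\mu$, is $1$ if $\lambda\in\mu^{\pm}$, and is $0$ otherwise.

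Since $\C S_n$ and $QP_{k-1}(n)$ are both semisimple, the decomposition of the bimodule $W^{\otimes k}$ into (irreducible $S_n$)$\,\otimes\,$(irreducible $QP_{k-1}(n)$) pieces is unique, so comparing the two expressions term by term yields $m_{\lambda\mu}=c(\mu)$ when $\lambda=\mu$, $m_{\lambda\mu}=1$ when $\lambda\in\mu^{\pm}$, and $m_{\lambda\mu}=0$ otherwise. Since adding and removing a box are mutually inverse and moving a corner box is symmetric, $\lambda\in\mu^{\pm}$ is equivalent to ``$\lambda$ and $\mu$ differ by the position of one corner, or by the addition or removal of one corner,'' which is precisely the rule defining the edges of $\hat P$ between levels $k-1$ and $k$ (the $c(\mu)$ parallel edges being the case $\lambda=\mu$); the bottom steps $QP_0(n)=\C\subset QP_1(n)=\C$ and $QP_1(n)\subset QP_2(n)$ are the instances $\mu=\emptyset$ (using $c(\emptyset)=0$, $\emptyset^{\pm}=\{(1)\}$) and $\mu=(1)$ of the same computation. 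Hence the Bratteli diagram of \eqref{QPchain} is $\hat P$. I expect the one point needing real care to be the bimodule comparison itself: matching coefficients requires the semisimplicity of $QP_{k-1}(n)$ together with the irreducibility and pairwise non-isomorphy of the $L_{k-1}(\mu)$, and it requires the correct identification of $QP_{k-1}(n)$ inside $QP_k(n)$ so that $W^{\otimes k}$ genuinely restricts to $W^{\otimes(k-1)}\otimes W$ with the last factor ignored; everything else is a direct application of \eqref{one-box-decomp} and the Centralizer Theorem.
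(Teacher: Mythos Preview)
Your proof is correct and follows essentially the same route as the paper's: both arguments invoke the double centralizer theorem to identify the branching multiplicities $m_{\lambda\mu}$ for $QP_{k-1}(n)\subset QP_k(n)$ with the multiplicity of $S^{\bar\lambda}$ in $S^{\bar\mu}\otimes W$, and then read off those multiplicities from the one-box rule \eqref{one-box-decomp}. Your version is more explicit than the paper's in spelling out the two $(S_n,QP_{k-1}(n))$-bimodule decompositions of $W^{\otimes k}$ and the role of the embedding $\bar d\mapsto \bar d\otimes\id_W$, but the underlying argument is the same.
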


\begin{proof}
Since $QP_k(n)$ is a centralizer algebra, we know by double centralizer theory that the decomposition of $W^{\otimes k}$ as an $S_n$-module yields the decomposition as a $QP_k(n)$-module. So we can use the decomposition rule in \eqref{one-box-decomp} to construct the Bratteli diagram for the chain in \eqref{QPchain} is a leveled graph completely described by  $\hat{P}$.

 The vertices at the $k$-th level of the Bratteli diagram are indexed by the irreducible components of $W^{\otimes k}$ as an $S_n$-module and the number of edges joining a vertex indexed by $\rho$ in the $(k-1)$-st  level to a vertex $\pi$ in the $k$-th is given by the multiplicity of $\pi$ in $V_\rho \otimes W$ as an $S_n$-module.

In other words, the vertices at the $i$th level index the irreducible components of $W^{\otimes k}$ as an $S_n$-module, and the number of edges joining $\alpha$ in the $i$th  level to a vertex $\beta$ in the $(i+1)$th level is the multiplicity of $S^{\overline{\beta}}$ in $S^{\overline{\alpha}} \otimes W$ as an $S_n$-module. 

\end{proof}

\vskip.1in
\begin{center}
\centerline{\bf Bratteli diagram for $QP_0(n) \subseteq QP_1(n) \subseteq QP_{2}(n) \subseteq \cdots$, levels 0--3.}
$$\xymatrix{
W^{\otimes 0} 
&\emptyset
\ar@{-}[d]
\\
W^{\otimes 1}
&\tiny{\yng(1)} 
		\ar@{-}[d]
		\ar@{-}@/^.1pc/[dr]
		\ar@{-}@/^.2pc/[drr]
		\ar@{-}@/^.5pc/[drrr]
\\
W^{\otimes 2}
&\emptyset 
		\ar@{-}@/^.1pc/[dr]
& \tiny{\yng(1)} 
		\ar@{-}@/_.1pc/[dl]
		\ar@{-}[d]
		\ar@{-}@/^.1pc/[dr]
		\ar@{-}@/^.1pc/[drr]
& \tiny{\yng(2)} 
		\ar@{-}@/_.1pc/[dl]
		\ar@{-}[d]
		\ar@{-}@/^.1pc/[dr]
		\ar@{-}@/^.1pc/[drr]
		\ar@{-}@/^.1pc/[drrr]
& \tiny{\yng(1,1)}
		\ar@{-}@/_.1pc/[dll]
		\ar@{-}@/_.1pc/[dl]
		\ar@{-}[d]
		\ar@{-}@/^.1pc/[drr]
		\ar@{-}@/^.5pc/[drrr]\\
W^{\otimes 3}
&\emptyset & \tiny{\yng(1)} & \tiny{\yng(2)} & \tiny{\yng(1,1)} & \tiny{\yng(3)} & \tiny{\yng(2,1)} 
		\ar@{-}@/_.5pc/[dlll]
		\ar@{-}@/_.3pc/[dll]
		\ar@2{-}@/_.1pc/[dl]
		\ar@{-}[d]
		\ar@{-}@/^.1pc/[dr]
		\ar@{-}@/^.1pc/[drr]
& \tiny{\yng(1,1,1)}\\
W^{\otimes 4}&&\cdots&\tiny{\yng(2)} & \tiny{\yng(1,1)} & \tiny{\yng(2,1)} & \tiny{\yng(3,1)} & \tiny{\yng(2,2)} & \tiny{\yng(2,1,1)} &\cdots
}$$
\end{center}

\begin{remark}
Notice that in general, the Bratteli diagram is not multiplicity free; if $\lambda$ has more than one corner in its diagram, there are that number of edges from $\lambda$ in level $i$  to $\lambda$ in level $i+1$. 
\end{remark}

As usual, we get that the number of paths from $\emptyset$ to $\lambda$ is equal to the dimension of the irreducible representation of $QP_k(n)$ indexed by $\lambda$.  These paths can be encoded by the set of tableaux  in the next definition.

 \begin{defi} Given a positive integer $k$ and partition $\lambda$, a sequence $T=(\mu^0, \ldots, \mu^k )$ is called an {\em Kronecker tableau} of shape $\lambda$ if it is a sequence of $k$ Young diagrams such that $\mu^0 =\emptyset$ and $\mu^k =\lambda$ and, for every pair $\mu^i$ and $\mu^{i+1}$  of consecutive diagrams, either $\mu^{i+1}$ is obtained from $\mu^i$ by the addition or removal of one corner, or $\mu^{i+1}$  differs from $\mu^i$ by the position of one corner, or $\mu^{i+1}=\mu^i$ and $\mu^{i+1}$ has one distinguished corner.   
 \end{defi}
 
For example, if $k=9$ a possible Tableaux of shape $\lambda = {\tiny {\yng(2)}}$ is
\[ T=(\emptyset, {\tiny {\yng(1)}}, {\tiny {\young(X)}}, {\tiny {\yng(2)}}, {\tiny {\yng(1,1)}}, {\tiny {\yng(2,1)}}, {\tiny {\young(~~,X)}}, {\tiny {\yng(1,1,1)}}, {\tiny {\yng(2,1)}}, {\tiny {\yng(2)}}) \]
Here we have indicated the distinguished corner by an $X$.   

The following is a direct consequence of the double centralizer Theorem as the number of paths from $\emptyset$ to $\lambda$ is equal to the dimension of  $L(\lambda)$. 

\begin{lemma} \label{lemma:nkt}
Let $\lambda$ be a partition indexing an irreducible representation $QP_k(n)$.   Then the number of Kronecker tableaux of shape $\lambda$ of length $k$  is equal to the dimension of the $L(\lambda)$. 
\end{lemma}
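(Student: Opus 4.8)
The plan is to show that the dimension of $L(\lambda)$ equals the number of Kronecker tableaux of shape $\lambda$ and length $k$ by an induction on $k$ that tracks both sides against the structure of the Bratteli diagram $\hat P$. The key input is the double centralizer theorem together with the restriction rule encoded by the edges of $\hat P$: the multiplicity of $L(\lambda)$ in $W^{\otimes k}$ (equivalently, by the Centralizer Theorem part (b), the dimension of $L(\lambda)$ as a $QP_k(n)$-module) is the multiplicity of $S^{\overline\lambda}$ in $S^{\overline\lambda}\otimes W$-type restrictions coming down the tower, and by the standard branching picture for a tower of multimatrix algebras this multiplicity counts paths from $\emptyset$ at level $0$ to $\lambda$ at level $k$.

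First I would recall that, since $n\gg 0$, Theorem on the decomposition of $W^{\otimes k}$ gives $W^{\otimes k}\cong\bigoplus_{|\lambda|\le k} S^{\overline\lambda}\otimes L_k(\lambda)$ as an $(S_n, QP_k(n))$-bimodule, and that $QP_k(n)$ is semisimple (being a centralizer algebra of a semisimple group algebra acting on a semisimple module). Then $\dim L_k(\lambda)$ equals the multiplicity of $S^{\overline\lambda}$ in $W^{\otimes k}$. Next I would set up the induction: for $k=0$ the only shape is $\emptyset$ and there is exactly one tableau (the empty sequence), matching $\dim L_0(\emptyset)=1$; similarly $k=1$ is immediate from \eqref{one-box-decomp} with $\alpha=\emptyset$. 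For the inductive step, I would use $W^{\otimes k}=W^{\otimes (k-1)}\otimes W$ and apply \eqref{one-box-decomp} termwise: the multiplicity of $S^{\overline\lambda}$ in $S^{\overline\mu}\otimes W$ is exactly $c(\lambda)$ if $\mu=\lambda$, is $1$ if $\mu\in\lambda^\pm$ (i.e. $\mu$ differs from $\lambda$ by the addition, removal, or repositioning of a single corner), and is $0$ otherwise — which is precisely the edge-multiplicity rule defining $\hat P$. Hence
\[
\dim L_k(\lambda)=\sum_{\mu}(\text{number of edges }\mu\to\lambda\text{ in }\hat P)\cdot \dim L_{k-1}(\mu),
\]
and by the inductive hypothesis each $\dim L_{k-1}(\mu)$ is the number of length-$(k-1)$ Kronecker tableaux of shape $\mu$. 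Appending the step $\mu\to\lambda$ in all $c(\lambda)$ (resp.\ $1$) allowed ways produces exactly the length-$k$ Kronecker tableaux of shape $\lambda$, since the definition of a Kronecker tableau encodes precisely these moves (the distinguished-corner case accounting for the $c(\lambda)$ loops $\lambda\to\lambda$).

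The step I expect to be the main obstacle — really the only place requiring care rather than bookkeeping — is matching the combinatorial moves in the definition of a Kronecker tableau against the three cases of \eqref{one-box-decomp} with the correct multiplicities, in particular making sure the $c(\lambda)$ edges from $\lambda$ to $\lambda$ correspond bijectively to the $c(\lambda)$ choices of distinguished corner, and that ``moving a corner box of $\alpha$ to another corner'' is counted with multiplicity one for each resulting shape $\mu\neq\lambda$ and not over- or under-counted. Everything else is a routine induction once the Bratteli diagram $\hat P$ has been identified (which the preceding Proposition already does), so I would keep that identification explicit and lean on it rather than re-deriving branching from scratch.
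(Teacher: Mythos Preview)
Your proposal is correct and follows exactly the approach the paper has in mind: the paper simply declares the lemma ``a direct consequence of the double centralizer Theorem as the number of paths from $\emptyset$ to $\lambda$ is equal to the dimension of $L(\lambda)$'' and gives no further argument, whereas you spell out the underlying induction on $k$ using \eqref{one-box-decomp} and the identification of the Bratteli diagram with $\hat P$. There is no substantive difference in method---you have written out the standard path-counting argument that the paper leaves implicit.
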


In the following theorem we give an exact formula for these dimensions.

\begin{theorem} Let $k$ and $n$ be two positive integers and $\lambda$ a partition of $n$ such that $n> k+\lambda_2$. Then the 
dimension of the irreducible representation indexed by $\lambda$ in $QP_{k}(n)$ is  
\[ f^{\lambda}\sum_{m_1=0}^{|\lambda|}\left( {k\choose m_1} \sum_{m_2=|\lambda|-m_1}^{\lfloor \frac{k-m_1}{2}\rfloor} {m_2\choose |\lambda|-m_1} sp_2(k-m_1,m_2)\right), \]
where $f^{\lambda}$ is the number of standard tableaux of shape $\lambda$ and $sp_2(a,b)$ is the number of set partitions of a set with $a$ elements into $b$ parts of size at least 2. 
\end{theorem}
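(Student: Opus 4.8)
The plan is to count Kronecker tableaux of shape $\lambda$ and length $k$ directly, using Lemma~\ref{lemma:nkt} to identify this count with the dimension of $L(\lambda)$. A Kronecker tableau $T = (\mu^0, \dots, \mu^k)$ records a walk on the Bratteli diagram $\hat P$ starting at $\emptyset$; at each step we either add a box, remove a box, move a corner, or stay put at a distinguished corner. I would first decompose each step into two pieces of data: the ``shape change'' (does $|\mu^{i+1}| = |\mu^i| + 1$, $|\mu^i| - 1$, or $|\mu^i|$?) and, superimposed on that, the finer combinatorial choice (which box, which corner). The key observation is that a step of type ``move a corner'' or ``distinguished corner'' can be factored as a box-removal immediately followed by a box-addition at the level of the underlying lattice of Young diagrams — i.e. every length-$k$ walk of the Kronecker type can be encoded by first choosing which $m_1$ of the $k$ steps are ``add'' steps and which of the remaining $k - m_1$ steps are used up in ``remove/stay'' activity, then filling in an honest up-down (oscillating) tableau structure on top.

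Concretely, I would introduce the intermediate statistics $m_1 = $ number of steps that net-add a box over the course of the walk and $m_2 = $ number of ``active'' boxes that ever appear (roughly: boxes that are added at some step, whether or not later removed), and argue a bijection between Kronecker tableaux of shape $\lambda$ and triples consisting of (i) a choice of which $m_1$ of the $k$ steps are genuine additions — giving $\binom{k}{m_1}$; (ii) a choice, among the $m_2$ ``slots'' of added boxes, of which $|\lambda| - m_1$ correspond to boxes that survive into $\lambda$ versus which get removed — giving $\binom{m_2}{|\lambda| - m_1}$; and (iii) a set partition of the remaining $k - m_1$ steps into $m_2$ blocks each of size at least $2$ (each block being a ``remove then re-add or distinguish'' episode, which must involve at least two steps) — giving $sp_2(k - m_1, m_2)$; and finally (iv) a standard Young tableau of shape $\lambda$ recording in which order the surviving boxes of $\lambda$ are built, giving the factor $f^\lambda$. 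Summing over the admissible ranges $0 \le m_1 \le |\lambda|$ and $|\lambda| - m_1 \le m_2 \le \lfloor (k - m_1)/2 \rfloor$ (the upper bound on $m_2$ forced by each of the $k - m_1$ non-addition steps being distributed among blocks of size $\ge 2$) yields exactly the claimed formula.

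The main obstacle, and the step I expect to require the most care, is pinning down the bijection in~(iii)–(iv) precisely: one must verify that the ``move a corner'' and ``distinguished corner'' steps are correctly absorbed into these remove/re-add episodes without double-counting, and that the standard-tableau factor $f^\lambda$ genuinely decouples from the rest. In particular, a step ``$\mu^{i+1} = \mu^i$ with a distinguished corner'' and a pair of steps ``remove a corner then add it back'' both get encoded the same way as a size-$\ge 2$ episode, so I need to set up the episode-labelling so that the internal structure of each episode (which corner, in which order) is accounted for by the appropriate multinomial/standard-tableau bookkeeping rather than left ambiguous. The cleanest route is probably to first handle the case $\lambda = \emptyset$ — where the count should reduce to $\sum_{m_2} sp_2(k, m_2)$, i.e. the number of set partitions of $[k]$ into blocks of size $\ge 2$, matching the dimension of the trivial $QP_k(n)$-summand — and then bootstrap to general $\lambda$ by peeling off the surviving boxes via an RSK-style insertion that contributes the $f^\lambda$ factor. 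Throughout I would keep the hypothesis $n > k + \lambda_2$ in play only insofar as it guarantees we are in the stable range where Lemma~\ref{lemma:nkt} and the decomposition rule~\eqref{one-box-decomp} hold exactly, so that the combinatorial count is the actual dimension.
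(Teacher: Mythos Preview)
The paper's own proof is a single sentence: it cites \cite[Prop.~2]{CG} for the count of Kronecker tableaux and invokes Lemma~\ref{lemma:nkt}. So you are attempting strictly more than the paper does --- you are trying to reconstruct the Chauve--Goupil bijection itself. That is a reasonable goal, and your instinct that the $f^\lambda$ factor decouples via an RSK-type insertion is correct. In fact the right way to read the formula is: the double sum counts set partitions of $[k]$ into $m_1+m_2$ blocks, $m_1$ of them singletons and $m_2$ of them of size $\ge 2$, with $|\lambda|$ blocks marked and every singleton marked; the factor $f^\lambda$ then records a bijection between the $|\lambda|$ marked blocks and the boxes of $\lambda$.

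The genuine gap is that your proposed statistics do not match this. You define $m_1$ as ``the number of steps that net-add a box,'' i.e.\ the number of steps with $|\mu^{i+1}|=|\mu^i|+1$. But any walk from $\emptyset$ to $\lambda$ has at least $|\lambda|$ such up-steps, so this $m_1$ satisfies $m_1\ge|\lambda|$, directly contradicting the summation range $0\le m_1\le|\lambda|$. Likewise your ``episodes'' are described as consecutive remove/re-add runs of length $\ge 2$, but a single ``move a corner'' or ``distinguished corner'' step is one step, not two, so there is no reason these should assemble into blocks of size $\ge 2$ under your encoding. The encoding that actually works is the half-step one: write each Kronecker step as a pair (possibly remove a box, possibly add a box) with the pair $(\text{nothing},\text{nothing})$ forbidden, obtaining a vacillating tableau of length $2k$ with no ``double-trivial'' step; then apply the growth/RSK bijection of the Chen--Deng--Du--Stanley--Yan or Halverson--Lewandowski type. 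Under that bijection the forbidden double-trivial steps are exactly what produce unmarked singletons on the set-partition side, which is why the formula has $sp_2$ rather than Stirling numbers. Your sketch does not set up this half-step encoding, and without it the parameters $m_1,m_2$ cannot be made to line up with the formula.
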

\begin{proof}
In \cite[Prop.\ 2]{CG} Chauve an Goupil have counted the number of Kronecker tableaux, hence by Lemma \ref{lemma:nkt} the result follows. 
\end{proof}

The number of paths from $\emptyset$ to $\lambda$ and back to $\emptyset$ is equal to the square of the dimension of $\lambda$. If we sum these values we obtain the dimension of $QP_k(n)$.  In our case, we get for the sum of squares of the dimensions of the irreducible representations is $1, 1, 4, 41, 715, 17722, \ldots$ (A000296) is the number of partitions of a $2k$-set into blocks of size  greater than 1, as expected.

\appendix 

\section{Finding the non-zero ``top terms''}\label{app:TopTerms}

\def\NodeA[#1,#2]{\filldraw [black] (#1, #2) circle (2pt);
				\draw (#1,#2-.4)--(#1,#2)--(#1+.4,#2-.4);}
\def\NodeB[#1,#2]{\draw (#1, #2) circle (4pt);
		\filldraw [black] (#1, #2) circle (1.5pt);}
\def\FrameA{
	\foreach \x in {3,4,5}{\draw (\x,1)--(\x,2);}
	\draw (.75,.75) -- (5.25,.75)--(5.25,-.5)--(.75,-.5)--(.75,.75);
	\draw[very thick, dotted] (1,1)--(1,.5) (2,1)--(2,.5);
	\foreach \x in {3,4,5}{\draw[very thick, dotted] (\x,1)--(\x,.75);}
	\Nodes[1][5]\Nodes[2][5]
	}
\def\FrameB{
	\foreach \x in {4,5}{\draw (\x,1)--(\x,2);}
	\draw (.75,.75) -- (5.25,.75)--(5.25,-.5)--(.75,-.5)--(.75,.75);
	\foreach \x in {1,2,3}{\draw[very thick, dotted] (\x,1)--(\x,.5);}
	\foreach \x in {3,4,5}{\draw[very thick, dotted] (\x,1)--(\x,.75);}
	\Nodes[1][5]\Nodes[2][5]
	}

\def\UNIT{.6}

In this appendix we prove Lemma \ref{lem:TopTerms}.  To do this we would like to track the coefficient of $[d_1d_2]$ (and therefore the coefficient of $\overline{d_1d_2}$) in the expansion of $\bar d_1 \bar d_2$ according to Lemmas \ref{lem:DominantTerms1} and \ref{lem:DominantTerms2}. If $[(d_1)_X][(d_2)_Y] = [d_1][d_2]$ as diagrams (neglecting coefficients), then necessarily, $X \subseteq [k']$ and $Y \subseteq [k]$. Therefore, we will restrict to focusing on the products of those terms 
\begin{enumerate}
\item in the expansion of $\bar d_1$ which isolate only bottom vertices, and 
\item in the expansion of $\bar d_2$ which isolate only top vertices.
\end{enumerate}

Further, if $d_1$ has a vertical size-2 block containing $i'$, then $Y$ cannot contain $i$ (and vice-versa). 
Note that for $i = 1, \dots, k-1$, since $\bar s_i = [s_i]$, we have $\bar s_i \bar d = \overline{s_i d}$. We continue with calculations for $d_1 = e_1, b_1, t_1,$ and $h_1$, and let $d_2 = d$ be any diagram in $\cD$.

\subsection{The coefficient of $[e_1d]$ in $\bar e_1 \bar d$.} The expansion of $\bar e_1$ is 
$$\bar{e}_1 = [e_1] + [(e_1)_{\{1',2' \}}] -\frac{1}{n}[(e_1)_{\{1,2 \}}]-\frac{1}{n}[(e_1)_{\{1,2,1',2'\}}].$$
The terms $[(e_1)_{X}] $ for which $X \subseteq [k']$ are $[e_1] + [(e_1)_{\{1',2' \}}]$. The only vertices which can then be isolated in $d$ are $1$ and $2$. Thus, possible contributions to $[e_1d]$ are
\begin{align*}
&~\begin{tikzpicture}[scale=\UNIT]
\draw (1,1)--(2,1) (1,2)--(2,2);
\NodeA[1,.5]\NodeA[2,.5]
\node[left] at (5.25,0){$d$};
\FrameA
\end{tikzpicture}
~\VCenter{3*\UNIT}{$+$}~
\begin{tikzpicture}[scale=\UNIT]
\draw  (1,2)--(2,2);
\NodeA[1,.5]\NodeA[2,.5]
\node[left] at (5.25,0){$d$};
\FrameA
\end{tikzpicture}
~\VCenter{3*\UNIT}{$+a$}~
\begin{tikzpicture}[scale=\UNIT]
\draw (1,1)--(2,1) (1,2)--(2,2);
\NodeA[1,.5]\NodeB[2,.5]
\node[left] at (5.25,0){$d_{\{2\}}$};
\FrameA
\end{tikzpicture}
~\VCenter{3*\UNIT}{$+a$}~
\begin{tikzpicture}[scale=\UNIT]
\draw (1,2)--(2,2);
\NodeA[1,.5]\NodeB[2,.5]
\node[left] at (5.25,0){$d_{\{2\}}$};
\FrameA
\end{tikzpicture}
\\
&~\VCenter{3*\UNIT}{$+b$}~
\begin{tikzpicture}[scale=\UNIT]
\draw (1,1)--(2,1) (1,2)--(2,2);
\NodeB[1,.5]\NodeA[2,.5]
\node[left] at (5.25,0){$d_{\{1\}}$};
\FrameA
\end{tikzpicture}
~\VCenter{3*\UNIT}{$+b$}~
\begin{tikzpicture}[scale=\UNIT]
\draw (1,2)--(2,2);
\NodeB[1,.5]\NodeA[2,.5]
\node[left] at (5.25,0){$d_{\{1\}}$};
\FrameA
\end{tikzpicture}
~ \VCenter{3*\UNIT}{$+ c$}~
\begin{tikzpicture}[scale=\UNIT]
\draw (1,1)--(2,1) (1,2)--(2,2);
\NodeB[1,.5]\NodeB[2,.5]
\node[left] at (5.25,0){$d_{\{1,2\}}$};
\FrameA
\end{tikzpicture}
~\VCenter{3*\UNIT}{$+c$}~
\begin{tikzpicture}[scale=\UNIT]
\draw  (1,2)--(2,2);
\NodeB[1,.5]\NodeB[2,.5]
\node[left] at (5.25,0){$d_{\{1,2\}}$};
\FrameA
\end{tikzpicture}
\\
\end{align*}
In the diagrams we emphasize that a vertex is isolated by circling it. 

\begin{enumerate}[C{a}se 1:]
\item  If the vertices $1$ and $2$ are in separate blocks, then $[e_1d]$ only appears in one place, since right multiplication of $d$ by $e_1$ results in $1$ and $2$ being in the same block.  The coefficient of $[e_1d]$ is 1. 


\item The vertices $1$ and $2$ are in the same block.
\begin{enumerate}
\item If $\{1,2\}$ is a block: 
\begin{tikzpicture}[scale=.4]
\draw (.75,.75) -- (5.25,.75)--(5.25,-.5)--(.75,-.5)--(.75,.75);
\node[left] at (5.25,0){$d$};
\Nodes[.5][2]
\draw (1,.5)--(2,.5);
\end{tikzpicture}
\\
Then $d_{\{1\}} = d_{\{2\}} = d_{\{1,2\}}$, so $a = b = -1/n$ and $c = 1/n$. Then the coefficient on $[e_1d]$ is 
$$(n-1) +(n-1) - (1/n)((n-1) + (n-1)^2)=\fbox{$n-1$}.$$

\item If $\{1, 2, m'\}$ or $\{1, 2, m\}$ is a block, then
$e_1 d $ has an isolated vertex, so $\bar e_1 \bar d = 0$.

\item The vertices $1$ and $2$ are in the same block, and are connected to at least two other vertices.\\
Then $a = b = -1/n$, $c = 1/n^2$, and the coefficient on $[e_1d]$ is 
$$1 + 1 -(1/n)(1+(n-1) + 1 + (n-1)) + (1/n^2)((n-1) + (n-1)^2) = \fbox{$\frac{n-1}{n}$}.$$
\end{enumerate}

\end{enumerate}

\subsection{The coefficient of $[b_1d]$ in $\bar b_1 \bar d$.} The expansion of $\bar b_1$ only has one term where the isolation avoids top vertices, namely $[b_1]$. Again, this restricts us to terms in $\bar d$ where the isolation is restricted to the first two vertices. Thus the possible contributions to $[b_1d]$ are
$$\begin{tikzpicture}[scale=\UNIT]
\draw (1,1)--(2,1)--(2,2)--(1,2)--(1,1);
\NodeA[1,.5]\NodeA[2,.5]
\node[left] at (5.25,0){$d$};
\FrameA
\end{tikzpicture}
~\VCenter{3*\UNIT}{$+a$}~
\begin{tikzpicture}[scale=\UNIT]
\draw (1,1)--(2,1)--(2,2)--(1,2)--(1,1);
\NodeA[1,.5]\NodeB[2,.5]
\node[left] at (5.25,0){$d_{\{2\}}$};
\FrameA
\end{tikzpicture}
~\VCenter{3*\UNIT}{$+b$}~
\begin{tikzpicture}[scale=\UNIT]
\draw (1,1)--(2,1)--(2,2)--(1,2)--(1,1);
\NodeB[1,.5]\NodeA[2,.5]
\node[left] at (5.25,0){$d_{\{1\}}$};
\FrameA
\end{tikzpicture}
~ \VCenter{3*\UNIT}{$+ c$}~
\begin{tikzpicture}[scale=\UNIT]
\draw (1,1)--(2,1)--(2,2)--(1,2)--(1,1);
\NodeB[1,.5]\NodeB[2,.5]
\node[left] at (5.25,0){$d_{\{1,2\}}$};
\FrameA
\end{tikzpicture}
$$

\begin{enumerate}[C{a}se 1:]
\item  The vertices $1$ and $2$ are in separate blocks.\\
In this case, $[b_1d]$ only appears in one place, since $b_1$ joins their blocks together.  Then the coefficient of $[b_1d]$ is 1.

\item The vertices $1$ and $2$ are in the same block.
\begin{enumerate}
\item 
If $\{1,2\}$ is a block, then $d_{\{1\}} = d_{\{2\}} = d_{\{1,2\}}$. So $a = b = -1/n$ and $c = 1/n$, and the coefficient on $[b_1d]$ is 
$1-1/n-1/n+1/n= \fbox{$\frac{n-1}{n}$}.$

\item If $\{1, 2, m'\}$ or $\{1, 2, m\}$ is a block:\\
Then $d_{\{1,2\}}$ doesn't contribute, and $a = b = -1/n$. Since $b_1$ joins $1$ and $2$ back together, $[b_1d]$ has coefficient 
$1 -1/n - 1/n = \fbox{$\frac{n-2}{n}$}.$

\item The vertices $1$ and $2$ are in the same block, and are connected to at least two other vertices.\\
Then $a = b = -1/n$, $c = 1/n^2$, and the coefficient on $[b_1d]$ is 
$$1 -1/n-1/n+1/n^2= \fbox{$ \frac{(n-1)^2}{n^2}$}.$$
\end{enumerate}
\end{enumerate}

\subsection{The coefficient of $[t_1d]$ in $\bar t_1 \bar d$.} 
The expansion of $\bar t_1$ only has one term where the isolation avoids top vertices, namely $[t_1]$. This restricts us to terms in $\bar d$ where the isolation is in the first three vertices. However, we can also note that the terms 
$$
\begin{tikzpicture}[scale=\UNIT]
\draw (1,1)--(1,2)--(2,2)--(1,1) (2,1)--(3,1)--(3,2)--(2,1);
\NodeB[1,.5]\NodeA[2,.5]\NodeA[3,.5]
\node[left] at (5.25,0){$d_{\{1\}}$};
\FrameB
\end{tikzpicture}
\qquad 
\begin{tikzpicture}[scale=\UNIT]
\draw (1,1)--(1,2)--(2,2)--(1,1) (2,1)--(3,1)--(3,2)--(2,1);
\NodeB[1,.5]\NodeA[2,.5]\NodeB[3,.5]
\node[left] at (5.25,0){$d_{\{1,3\}}$};
\FrameB
\end{tikzpicture}
$$
$$\begin{tikzpicture}[scale=\UNIT]
\draw (1,1)--(1,2)--(2,2)--(1,1) (2,1)--(3,1)--(3,2)--(2,1);
\NodeB[1,.5]\NodeB[2,.5]\NodeA[3,.5]
\node[left] at (5.25,0){$d_{\{1,2\}}$};
\FrameB
\end{tikzpicture}
\qquad 
\begin{tikzpicture}[scale=\UNIT]
\draw (1,1)--(1,2)--(2,2)--(1,1) (2,1)--(3,1)--(3,2)--(2,1);
\NodeA[1,.5]\NodeB[2,.5]\NodeB[3,.5]
\node[left] at (5.25,0){$d_{\{2,3\}}$};
\FrameB
\end{tikzpicture}
\qquad 
\begin{tikzpicture}[scale=\UNIT]
\draw (1,1)--(1,2)--(2,2)--(1,1) (2,1)--(3,1)--(3,2)--(2,1);
\NodeB[1,.5]\NodeB[2,.5]\NodeB[3,.5]
\node[left] at (5.25,0){$d_{\{1,2,3\}}$};
\FrameB
\end{tikzpicture}
$$
will not contribute to $[t_1d]$ in any case. Therefore the possible contributors are
$$\quad
\begin{tikzpicture}[scale=\UNIT]
\draw (1,1)--(1,2)--(2,2)--(1,1) (2,1)--(3,1)--(3,2)--(2,1);
\NodeA[1,.5]\NodeA[2,.5]\NodeA[3,.5]
\node[left] at (5.25,0){$d$};
\FrameB
\end{tikzpicture}
\quad\VCenter{3*\UNIT}{$+a$}\quad
\begin{tikzpicture}[scale=\UNIT]
\draw (1,1)--(1,2)--(2,2)--(1,1) (2,1)--(3,1)--(3,2)--(2,1);
\NodeA[1,.5]\NodeB[2,.5]\NodeA[3,.5]
\node[left] at (5.25,0){$d_{\{2\}}$};
\FrameB
\end{tikzpicture}\quad \VCenter{3*\UNIT}{$+ b$}\quad
\begin{tikzpicture}[scale=\UNIT]
\draw (1,1)--(1,2)--(2,2)--(1,1) (2,1)--(3,1)--(3,2)--(2,1);
\NodeA[1,.5]\NodeA[2,.5]\NodeB[3,.5]
\node[left] at (5.25,0){$d_{\{3\}}$};
\FrameB
\end{tikzpicture}
$$

\begin{enumerate}[C{a}se 1:]
\item  If $2$ and $3$ are in separate blocks,
then $[t_1d]$ only appears in one place, since $t_1$ joins their blocks together.  In this case its coefficient is 1.

\item If $\{2,3\}$ is a block, 
then $\bar t_1 \bar d = 0$.

\item Otherwise, $a = b = -1/n$ and the coefficient on $[t_1d]$ is $1 - 2/n = \fbox{$\frac{n - 2}{n}$}$.

\end{enumerate}

\subsection{The coefficient of $[h_1d]$ in $\bar h_1 \bar d$.} 
The expansion of $\bar h_1$ has two terms which have no top vertices isolated, namely 
$ [h_1] - [(h_1)_{\{1',2',3'\}}].$ This restricts us to terms in $\bar d$ where the isolation is in the first three vertices, and the possible contributors to $[h_1d]$ are
\def\UNIT{.6}
\begin{align*}
&~ \begin{tikzpicture}[scale=\UNIT]
\draw (1,1)--(2,1)--(3,1) (1,2)--(2,2)--(3,2);
\NodeA[1,.5]\NodeA[2,.5]\NodeA[3,.5]
\node[left] at (5.25,0){$d$};
\FrameB
\end{tikzpicture}
~ \VCenter{3*\UNIT}{$-$}~
\begin{tikzpicture}[scale=\UNIT]
\draw (1,2)--(2,2)--(3,2);
\NodeA[1,.5]\NodeA[2,.5]\NodeA[3,.5]
\node[left] at (5.25,0){$d$};
\FrameB
\end{tikzpicture}
~\VCenter{3*\UNIT}{$+a_1$}~
\begin{tikzpicture}[scale=\UNIT]
\draw (1,1)--(2,1)--(3,1) (1,2)--(2,2)--(3,2);
\NodeB[1,.5]\NodeA[2,.5]\NodeA[3,.5]
\node[left] at (5.25,0){$d_{\{1\}}$};
\FrameB
\end{tikzpicture}
~ \VCenter{3*\UNIT}{$-a_1$}~
\begin{tikzpicture}[scale=\UNIT]
\draw (1,2)--(2,2)--(3,2);
\NodeB[1,.5]\NodeA[2,.5]\NodeA[3,.5]
\node[left] at (5.25,0){$d_{\{1\}}$};
\FrameB
\end{tikzpicture}\\
&\VCenter{3*\UNIT}{$+a_2$}~
\begin{tikzpicture}[scale=\UNIT]
\draw (1,1)--(2,1)--(3,1) (1,2)--(2,2)--(3,2);
\NodeA[1,.5]\NodeB[2,.5]\NodeA[3,.5]
\node[left] at (5.25,0){$d_{\{2\}}$};
\FrameB
\end{tikzpicture}
~ \VCenter{3*\UNIT}{$-a_2$}~
\begin{tikzpicture}[scale=\UNIT]
\draw (1,2)--(2,2)--(3,2);
\NodeA[1,.5]\NodeB[2,.5]\NodeA[3,.5]
\node[left] at (5.25,0){$d_{\{2\}}$};
\FrameB
\end{tikzpicture}
~\VCenter{3*\UNIT}{$+a_3$}~
\begin{tikzpicture}[scale=\UNIT]
\draw (1,1)--(2,1)--(3,1) (1,2)--(2,2)--(3,2);
\NodeA[1,.5]\NodeA[2,.5]\NodeB[3,.5]
\node[left] at (5.25,0){$d_{\{3\}}$};
\FrameB
\end{tikzpicture}
~ \VCenter{3*\UNIT}{$-a_3$}~
\begin{tikzpicture}[scale=\UNIT]
\draw (1,2)--(2,2)--(3,2);
\NodeA[1,.5]\NodeA[2,.5]\NodeB[3,.5]
\node[left] at (5.25,0){$d_{\{3\}}$};
\FrameB
\end{tikzpicture}\\
&\VCenter{3*\UNIT}{$+b_1$}~
\begin{tikzpicture}[scale=\UNIT]
\draw (1,1)--(2,1)--(3,1) (1,2)--(2,2)--(3,2);
\NodeA[1,.5]\NodeB[2,.5]\NodeB[3,.5]
\node[left] at (5.25,0){$d_{\{2,3\}}$};
\FrameB
\end{tikzpicture}
~ \VCenter{3*\UNIT}{$-b_1$}~
\begin{tikzpicture}[scale=\UNIT]
\draw (1,2)--(2,2)--(3,2);
\NodeA[1,.5]\NodeB[2,.5]\NodeB[3,.5]
\node[left] at (5.25,0){$d_{\{2,3\}}$};
\FrameB
\end{tikzpicture}
~\VCenter{3*\UNIT}{$+b_2$}~
\begin{tikzpicture}[scale=\UNIT]
\draw (1,1)--(2,1)--(3,1) (1,2)--(2,2)--(3,2);
\NodeB[1,.5]\NodeA[2,.5]\NodeB[3,.5]
\node[left] at (5.25,0){$d_{\{1,3\}}$};
\FrameB
\end{tikzpicture}
~ \VCenter{3*\UNIT}{$-b_2$}~
\begin{tikzpicture}[scale=\UNIT]
\draw (1,2)--(2,2)--(3,2);
\NodeB[1,.5]\NodeA[2,.5]\NodeB[3,.5]
\node[left] at (5.25,0){$d_{\{1,3\}}$};
\FrameB
\end{tikzpicture}\\
&\VCenter{3*\UNIT}{$+b_3$}~
\begin{tikzpicture}[scale=\UNIT]
\draw (1,1)--(2,1)--(3,1) (1,2)--(2,2)--(3,2);
\NodeB[1,.5]\NodeB[2,.5]\NodeA[3,.5]
\node[left] at (5.25,0){$d_{\{1,2\}}$};
\FrameB
\end{tikzpicture}
~ \VCenter{3*\UNIT}{$-b_3$}~
\begin{tikzpicture}[scale=\UNIT]
\draw (1,2)--(2,2)--(3,2);
\NodeB[1,.5]\NodeB[2,.5]\NodeA[3,.5]
\node[left] at (5.25,0){$d_{\{1,2\}}$};
\FrameB
\end{tikzpicture}
~\VCenter{3*\UNIT}{$+c$}~
\begin{tikzpicture}[scale=\UNIT]
\draw (1,1)--(2,1)--(3,1) (1,2)--(2,2)--(3,2);
\NodeB[1,.5]\NodeB[2,.5]\NodeB[3,.5]
\node[left] at (5.25,0){$d_{\{1,2,3\}}$};
\FrameB
\end{tikzpicture}
~ \VCenter{3*\UNIT}{$-c$}~
\begin{tikzpicture}[scale=\UNIT]
\draw (1,2)--(2,2)--(3,2);
\NodeB[1,.5]\NodeB[2,.5]\NodeB[3,.5]
\node[left] at (5.25,0){$d_{\{1,2,3\}}$};
\FrameB
\end{tikzpicture}\\
\end{align*}

\begin{enumerate}[C{a}se 1:]
\item If $1$, $2$, and $3$ are all in separate blocks, then
there is no repetition, and the top term appears with coefficient 1. 
\item The vertices $1$ and $2$ appear in the same block, but separate from 3 (similarly for 1 and 3 or 2 and 3). \\
\begin{enumerate}
\item If $\{1,2\}$ is a block: \\
If 3 is also in a block of size 2, then $h_1 d$ has an isolated vertex, and $\bar h_1 \bar d = 0$.

If 3 is in a larger block, then since $a_1=a_2=a_3=-b_3 = -1/n$, $b_1 = b_2 = -c=1/n^2$,
 the coefficient is 
 $$1 - (n-1) -(2/n)(1-(n-1)^2) -(1/n)((n-1) - (n-1)^2) $$ $$+(2/n^2)((n-1)-(n-1)^3) +(1/n)(1-(n-1)^2) - (1/n^2)((n-1) - (n-1)^3) $$ $$= \fbox{$(n-1)(n-2)$}.$$

\item Otherwise, none of the terms with $[ (h_1)_{\{1',2',3'\}}]$ will contribute.
So the only possible contributors are 
$$[h_1][d], \quad [h_1][d_{\{1\}}], \quad \text{ and } \quad [h_1][d_{\{2\}}]$$
(because we need $h_1$ to bond 3 to $1$ and $2$, and we need one of 1 or 2 to bond to the rest of their block). Then since $a_1 = a_2 = -1/n$, the top term has a coefficient of $1 - 2/n = \fbox{$\frac{n - 2}{n}$}$.
\end{enumerate}
\item The vertices $1$, $2$, and $3$ are all in the same block.
\begin{enumerate}
\item If $\{1,2,3\}$ is a block:\\
Then $d_{\{1,2\}} = d_{\{1,3\}} = d_{\{2,3\}} = d_{\{1,2,3\}}$, $b_1=b_2=b_3 = -c = 1/n^2$,\\
 $a_1 = a_2 = a_3=-1/n$. The coefficient on the top terms is  
$$(n-1)\left( 
1-1-(3/n)(1-(n-1)) + (2/n^2)(1-(n-1)^2) \right)=  \fbox{$\frac{(n-2)(n-1)}{n}$}
$$
\item If $\{1,2,3,m'\}$ or $\{1,2,3,m\}$ is a block,  then $\bar h_1 \bar d = 0$.
\item Otherwise, the vertices $1$, $2$, and $3$ are in a block of size 5 or more.\\
Then $a_1 = a_2 = a_3 = -1/n$, $b_1= b_2 = b_3 = 1/n^2$, and $c = -1/n^3$ and the coefficient on the top term is 
$$1-1 - (3/n)(1-(n-1) ) + (3/n^2)(1-(n-1)^2)-(1/n^3)((n-1)-(n-1)^3) =  \fbox{$\frac{ (n-1)(n-2)}{n^2}$}.$$

\end{enumerate}

\end{enumerate}

\end{document}